\newtheorem{theorem}{Theorem}[section]
\newtheorem{definition}{Definition}[section]
\newtheorem{lemma}{Lemma}[section]
\title{A Proof of the Strict Monotone 5-Step Conjecture}
\date{June 2018}
\author{J. Mackenzie Gallagher}
\email{jmgallagher36@gmail.com}
\author{Walter D. Morris, Jr.}
\address{Department of Mathematical Sciences,
George Mason University, Fairfax, Virginia, USA}
\email{wmorris@gmu.edu}
\begin{document}
\maketitle

\begin{abstract}
    A computer search through the oriented matroid programs with dimension 5 and 10 facets shows that the maximum strictly monotone diameter is 5.  Thus $\Delta_{sm}(5,10)=5$.  This enumeration is analogous to that of Bremner and Schewe for the non-monotone diameter of 6-polytopes with 12 facets.  Similar enumerations show that $\Delta_{sm}(4,9)=5$ and $\Delta_m(4,9)=\Delta_m(5,10)=6.$ We shorten the known non-computer proof of the strict monotone 4-step conjecture.
\end{abstract}

\section{Introduction}
The problem solved here for $d=5$ by computer enumeration was solved in the case $d=4$ by Holt and Klee \cite{HK}.  The method we use stems from Bremner and Schewe \cite{BremnerSchewe}. We follow the definitions and notation from these landmark papers.  

A linear functional is {\it admissible} for a convex polytope $P$ if it is not constant on any edge of $P$.  An admissible functional on a convex polytope $P$ induces an orientation of the graph of $P$, with edges oriented in the direction of increase of the functional.  For integers $d,n$ with $0<d<n$, three functions are of interest to us.  $\Delta(d,n)$ is the maximum diameter of a $d$-polytope with $n$ facets.  $\Delta_m(d,n)$ is the maximum length that the shortest directed path between two vertices of a $d$-polytope $P$ with $n$ facets can be, in an orientation given by an admissible functional.  $\Delta_{sm}(d,n)$ is the maximum length of a directed path from the source of a $d$-polytope $P$ with $n$ facets to its sink, where again the orientation is determined by an admissible functional. These functions satisfy $\Delta(d,n)\le\Delta_{sm}(d,n)\le\Delta_m(d,n)$.

Holt and Klee \cite{HK} showed that $\Delta_{sm}(4,8)=4,$ thereby proving the {\it strict monotone 4-step conjecture.}  Their proof (which we shorten in Lemma \ref{foureight}) involves some enumeration, but can be followed by a diligent reader.  We prove that $\Delta_{sm}(5,10)=5$ by computer enumeration.  Because $\Delta_{sm}(d,2d) \ge \Delta(d,2d)$ and Santos' \cite{Santos} result that $\Delta(43,86)\ge 44$, it is known that there is a largest $d$ for which $\Delta_{sm}(d,2d)=d$. This largest $d$ is not expected to be very large, due to the result of Todd \cite{Todd}, that $\Delta_m(4,8)>4$.  Holt and Klee gave an orientation of a $5$-polytope with 10 facets for which the shortest directed path from source to sink was greater than 5.  This orientation satisfied some combinatorial conditions that admissible orientations must satisfy, such as having a unique source and sink on every face.  

Bremner and Schewe \cite{BremnerSchewe}, \cite{BDHW} showed how to use satisfiability solvers to determine if there existed a counterexample to the 6-step conjecture $\Delta(6,12)=6$.  They showed that that not only did there not exist such a polytope, but there existed no {\it matroid polytope} that could serve as the dual of a combinatorial counterexample.  

We determined by computer that the orientation of Holt and Klee did not meet the more stringent combinatorial requirement that it come from an {\it oriented matroid program}.  This led us to adapt the technique of Bremner and Schewe to 
show that no oriented matroid program could serve as a counterexample to the strict monotone 5-step conjecture.  The satisfiability problems encountered are of roughly the same size as those appearing in Bremner and Schewe's investigation of the 6-step conjecture.  The computation showed that no counterexample exists.

\section {Oriented Matroid Programming}
The standard reference for oriented matroid programming is Chapter 10 of \cite{OMbook}. Historical development can be found in  \cite{Bland},\cite{EdmondsFukuda} and \cite{FL}.

Let $A$ be an $(n-d) \times n$ matrix of rank $n-d$.  In solving a linear program 

$$\mbox {Maximize } f=c^Tx+\beta \mbox { subject to }Ax=b, x\ge 0,$$
with the simplex method, one encounters simplex tableaux, such as the following: 

$$
T = \left[
\begin{array}{c|c|c|c}
I&N & 0& -\overline{b} \\ \hline
0& -\overline{c}^T& 1&-\overline{\beta}
\end{array}\right].
$$

The matrix $T$ is an $(n-d+1) \times (n+2)$ of rank $n-d+1$. The first $n$ columns correspond to the variables $x_1,x_2,\ldots,x_n$, and column $n+1$ corresponds to the variable $f$.  We make the following assumption: Every nonzero vector in the row space of $T$ has at least $d+2$ nonzero components. We can make this assumption because \cite{HK} shows that the monotone diameter is always maximized by a {\it simple} polytope.
This assumption also implies that every nonzero vector in the null space of $T$ has at least $n-d+2$ nonzero components.  The {\it support} of a vector $x$ in the row space of $T$ is the set of indices $i$ in $[n+2]$ for which $x_i \neq 0$. For a vector $x$ in the row space of $T$, the {\it sign vector} associated to $x$ is the vector $\sigma(x)$ in $\{-,0,+\}^{n+2}$ where each entry is the sign of the corresponding entry of $x$. Each such nonzero $\sigma(x)$ with minimal support is called a {\it circuit} of the {\it oriented matroid} realized by $T$. Sign vectors corresponding to nonzero vectors in the null space with minimal support are called {\it cocircuits} of the oriented matroid realized by $T$.  

The entries of $[n+2]$ indexing the columns of the identity matrix in the first $n-d$ rows of $T$ are called {\it basic}, and the entries indexing the columns of the matrix $N$ are called {\it nonbasic}. The {\it fundamental circuit} of the tableau $T$ is the sign vector of the bottom row.  The {\it fundamental cocircuit} is the sign vector for the vector in the null space of $T$ which is zero on all the nonbasic entries of $[n+2]$ and is positive in the last entry.   

A tableau corresponds to a vertex of the feasible region (is {\it feasible}) if the fundamental cocircuit is positive on all basic entries.  If the tableau $T$ above were feasible, then the fundamental cocircuit would be $(+,\ldots,+,0,\ldots,0,?,+)$, where the $+,\ldots,+$ are the basic entries, the $0,\ldots,0$ are the nonbasic entries, $?$ is an element of $\{-,+\}$ in the $n+1$ position. The vertex corresponding to a tableau is contained in the facets indexed by the nonbasic entries.  A nonbasic entry of the fundamental circuit of a feasible tableau is negative (resp. positive) if the objective function increases (resp. decreases) as one moves on the edge away from the corresponding facet.  Thus, if the tableau above corresponded to the source, the fundamental circuit would be $(0,\ldots,0,-,\ldots,-,+,?)$. If it were the sink, it would be 
$(0,\ldots,0,+,\ldots,+,+,?)$.  

The collection of all sign vectors of the null space of $T$ is called the set of {\it covectors} of an {\it oriented matroid} of {\it rank} $r=d+1$. The $(d+1)$-element subsets of $[n+2]$ are the {\it bases} of the oriented matroid.

\begin{theorem}\label{circchiro} (see Section 3.5 of \cite{OMbook}.)
 Given the assumptions above, there are exactly two functions $\chi$ from the set of ordered $(d+1)$-element subsets (bases) of $[n+2]$ to $\{+,-\}$ that satisfy the following:

\begin{enumerate}
    \item $\chi$ is alternating
    \item for any two ordered bases of the form $(e,x_2,\ldots,x_r)$ and $(f,x_2,\ldots, x_r)$, $e \neq f$, we have $\chi(f,x_2,\ldots,x_r)=-C(e)C(f)\chi(e,x_2,\ldots,x_r),$ where $C$ is one of the two opposite signed circuits of the oriented matroid with support $\{e,f,x_2,\ldots,x_r\}$.
    \item for any two ordered bases of the form $(e,x_2,\ldots,x_r)$ and $(f,x_2,\ldots, x_r)$, $e \neq f$, we have $\chi(f,x_2,\ldots,x_r)=-D(e)D(f)\chi(e,x_2,\ldots,x_r),$ where $D$ is one of the two opposite signed cocircuits of the oriented matroid with support $\{e,f\} \cup [n+2]\backslash\{x_2,\ldots,x_r\}$.
\end{enumerate}
\end{theorem}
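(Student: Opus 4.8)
The plan is to exploit the fact that the oriented matroid in question is already \emph{realized}, by the matrix $T$, and to build the functions $\chi$ from signs of maximal minors of a representing matrix. First I would choose a $(d+1)\times(n+2)$ matrix $M$ whose rows form a basis of the null space of $T$, writing $M^c$ for its $c$-th column. Then the covectors of the oriented matroid are exactly the sign vectors of the row space of $M$ (which is the null space of $T$), its circuits are the sign vectors of minimal support in $\ker M$ (which is the row space of $T$), its cocircuits are the sign vectors of minimal support in the row space of $M$, and a $(d+1)$-subset is a basis precisely when the corresponding columns of $M$ are linearly independent. The two functions produced by this construction will be $\chi_M$ and $-\chi_M$, where $\chi_M(x_1,\ldots,x_{d+1}):=\mathrm{sign}\,\det(M^{x_1}\mid\cdots\mid M^{x_{d+1}})$.

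For $\chi_M$, property (1) is just the alternating property of the determinant. For (2), given ordered bases $(e,x_2,\ldots,x_r)$ and $(f,x_2,\ldots,x_r)$, the assumptions force $\{e,f,x_2,\ldots,x_r\}$ to support a circuit, so there is $z\in\ker M$ with that support, i.e.\ $z_eM^e+z_fM^f+\sum_{j\ge2}z_{x_j}M^{x_j}=0$ and $z_e,z_f\neq0$; solving for $M^e$ and expanding $\det(M^e\mid M^{x_2}\mid\cdots\mid M^{x_r})$ by multilinearity (the terms with a repeated column vanish) gives $\det(M^e\mid\cdots)=-(z_f/z_e)\,\det(M^f\mid\cdots)$, and passing to signs, with $C=\sigma(z)$, yields relation (2). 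For (3), I would take $w\in\mathbb{R}^{d+1}$ normal to the span of $M^{x_2},\ldots,M^{x_r}$ (a hyperplane, as those $d$ columns are independent); since $u\mapsto\det(u\mid M^{x_2}\mid\cdots\mid M^{x_r})$ is linear and kills each $M^{x_j}$, it is a fixed nonzero multiple of $\langle w,u\rangle$, so $\det(M^e\mid\cdots)$ and $\det(M^f\mid\cdots)$ are that same multiple of $\langle w,M^e\rangle$ and $\langle w,M^f\rangle$; the cocircuit supported on $[n+2]\setminus\{x_2,\ldots,x_r\}$ is, up to a global sign, $\sigma(w^TM)$, so passing to signs gives relation (3) with the cocircuit sign convention of \cite{OMbook}. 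Alternatively, (3) follows from (1) and (2) by oriented-matroid duality, so it need not be checked separately.

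It remains to show that there are \emph{exactly} two such functions, i.e.\ that (1) and (2) pin down $\chi$ up to a global sign. I would fix a base $B_0$: the value of $\chi$ on one ordering of $B_0$ is a single element of $\{+,-\}$, and by (1) it determines $\chi$ on every ordering of $B_0$. The basis exchange graph of the underlying matroid --- bases as vertices, an edge whenever two bases differ in a single element --- is connected, by the classical symmetric exchange property. Along an edge joining $B=S\cup\{e\}$ to $B'=S\cup\{f\}$ with $|S|=d$, one orders $B$ as $(e,x_2,\ldots,x_r)$ and $B'$ as $(f,x_2,\ldots,x_r)$ with the common part $S$ listed in the same order, and relation (2) then determines $\chi$ on $(f,x_2,\ldots,x_r)$ from its value on $(e,x_2,\ldots,x_r)$ and the circuit on $\{e,f\}\cup S$, which is given data; with (1) again this forces all values of $\chi$ on $B'$. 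Propagating along a path from $B_0$ shows $\chi$ is determined everywhere by $\chi(B_0)$, so there are at most two admissible functions; since $\chi_M$ and $-\chi_M$ are two, there are exactly two, and they are negatives of each other. No cycle-consistency check is needed for this propagation, since the existence of $\chi_M$ makes it automatic. The routine parts are the multilinear-algebra identities; I expect the two points needing genuine care to be the sign bookkeeping in (2) and (3) (matching the signs produced by the determinant substitutions to $\sigma(z)$ and $\sigma(w^TM)$ on the coordinates $e$ and $f$) and the appeal to connectivity of the exchange graph, which is the conceptual core of the ``exactly two'' assertion and the place I would most expect subtleties.
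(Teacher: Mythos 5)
The paper itself gives no proof of this theorem --- it is quoted with a pointer to Section 3.5 of \cite{OMbook} --- so there is no internal argument to compare against. Your self-contained proof for the realizable, uniform case at hand (take $M$ with rows spanning the null space of $T$, set $\chi_M=\mathrm{sign}\det$ of columns, get the circuit relation by substituting a kernel relation into the determinant, get the cocircuit relation from the linear functional $u\mapsto\det(u\mid M^{x_2}\mid\cdots\mid M^{x_r})$, and prove uniqueness by propagating (1)--(2) along the basis exchange graph, with cycle-consistency supplied by the existence of $\chi_M$) is a legitimate and essentially standard route; note that the support assumption makes the matroid uniform, so every $(d+1)$-set is a basis, every $(d+2)$-set carries a full-support circuit, and the exchange-graph connectivity you invoke is immediate.

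There is, however, one point you gloss over, and it sits exactly where you predicted the subtlety would be. Your computation for item (3) yields $\chi(f,x_2,\ldots,x_r)=+D(e)D(f)\,\chi(e,x_2,\ldots,x_r)$: with $D=\sigma(w^TM)$ you have $\chi(e,x_2,\ldots,x_r)=\mathrm{sign}(c)\,D(e)$ and $\chi(f,x_2,\ldots,x_r)=\mathrm{sign}(c)\,D(f)$, and the product relation comes out with a plus sign. The minus sign printed in item (3) cannot be absorbed by any ``cocircuit sign convention,'' since $D(e)D(f)$ is unchanged when $D$ is replaced by $-D$, and it is also unaffected by where the differing element is placed in the ordering. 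So as literally printed, (3) is not satisfied by $\pm\chi_M$, and since (2) already forces $\chi=\pm\chi_M$, no function would satisfy (1)--(3) verbatim; the printed minus sign is evidently a slip, as the paper's own use of the theorem shows: from the cocircuit $(+,0,0,0,0,+,+,+,0,+,?,+)$ of column 3 it derives equalities such as $\chi(2,3,4,5,9,1)=\chi(2,3,4,5,9,12)$, and Section 3.1 asserts $\chi([n]\backslash A,i)=\chi([n]\backslash A,g)$ --- both the plus version. Your argument therefore proves the corrected statement, and you should say so explicitly rather than appeal to a convention. For the same reason, your alternative remark that (3) ``follows from (1) and (2) by oriented-matroid duality, so it need not be checked separately'' is too quick: the passage between $\chi$ and the dual chirotope carries permutation-dependent signs, and the whole content of (3) versus (2) is that the cocircuit relation carries the opposite sign from the circuit relation, so the sign does need the direct check you in fact perform with the normal-vector argument.
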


\begin{definition}\label{chirodef} A mapping $\chi$ from the $r$-element ordered subsets of $[n+2]$ to $\{-1,+1\}$ is said to be a {\it uniform chirotope} of rank $r$ if it satisfies the following two conditions: 
\begin{enumerate}
    \item $\chi$ is alternating
    \item For all $\sigma \in {n \choose r-2}$ and all subsets $(x_1,\ldots,x_4) \subseteq [n+2]\backslash \sigma$,\newline $\{\chi(\sigma,x_1,x_2)\chi(\sigma,x_3,x_4), -\chi(\sigma,x_1,x_3)\chi(\sigma,x_2,x_4),\chi(\sigma,x_1,x_4)\chi(\sigma,x_2,x_3)\}$ \newline $=\{-1,+1\}$
\end{enumerate} 
\end{definition}

By Theorem 3.5.5 of \cite{OMbook}, each of the two functions obtained from the cocircuits and circuits of an oriented matroid must be a chirotope.  In order to determine if an orientation of a digraph is obtained from an admissible function on a polytope, we set up a satisfiability problem with a variable for each $(d+1)$-set. The alternating property implies that we only need a variable for the natural ordering of a given set.  Schewe \cite{Schewe} shows how to define clauses of a conjunctive normal form statement that are necessary to satisfy the second condition of Definition \ref{chirodef}. For each $r$-element subset $\sigma$ of $[n+2]$ and each 4-element subset of $[n+2]\backslash \sigma,$ there are 16 clauses with 6 literals each that must be satisfied.  We call these clauses the {\it chirotope constraints}. 
\subsection{Holt and Klee's Digraph}
To determine if the orientation of the 5-polytope proposed by Holt and Klee could come from an oriented matroid program, we listed the fundamental cocircuits, minus the last two entries, associated to the vertices as columns of the following matrix, which we call a {\it facet-vertex} matrix:

\begin{tiny}
\begin{verbatim}
[-1 -1  0 -1 -1 -1  0  0  0  0  0  0 -1  0  0  1  1 -1 -1  1  1  0  0  0  0  0  0 -1 -1  0  1 -1 -1  0  0  0  0  0  0 -1  0  0]
[-1  0  1  0  0  0  1  1  1  0  0  0  0 -1  0  1  1  0  0  0  0  1  1  1  1  0  0 -1  0 -1  0  0  0 -1  1  1  0  0  0  0 -1  0]
[-1 -1 -1  0  0  0 -1 -1 -1  1  1  1  0  0  0 -1  0  1  0  0  0  0  1  0  0  0  0  1 -1  1  0  0  0  1 -1 -1 -1  1  1  0  0  0]
[-1 -1 -1  1 -1 -1  0  0  0 -1 -1 -1  0  0  0  0  1  0  1  0  0  1  0  0  0  0  0  1  1 -1 -1  1  1  0  0  0  1 -1 -1  0  0  0]
[ 0  0  1  1  0  0 -1  0  0  1  0  0  1  0  1  0  0  0 -1  1  0 -1  0  1  0 -1 -1  0  0  1 -1  0  0 -1  0  0 -1  0  0 -1  0  1]
[ 0  1  0  0 -1  0  0  1  0  0 -1  0  0  1  1  0  0 -1  0  0 -1  0 -1  0 -1 -1 -1  0  1  0  0 -1  0  0 -1  0  0 -1  0  0  1  1]
[ 0  0  0  0  1 -1  0 -1  1  0  1 -1 -1 -1 -1  0  0  0  0 -1 -1  0  0  0  0 -1  0  0  0  0  0  1  1  0  1  1  0  1  1  1  1  1]
[ 0  0  0 -1  0  1  1  0 -1 -1  0  1 -1  1 -1  0  0  0  0  0  0  0  0 -1 -1  0  1  0  0  0  1  0  1  1  0  1  1  0  1  1  1  1]
[-1 -1 -1 -1 -1 -1 -1 -1 -1 -1 -1 -1 -1 -1 -1 -1 -1 -1 -1 -1 -1 -1 -1 -1 -1 -1 -1  0  0  0  0  0  0  0  0  0  0  0  0  0  0  0]
[ 0  0  0  0  0  0  0  0  0  0  0  0  0  0  0  1  1  1  1  1  1  1  1  1  1  1  1  1  1  1  1  1  1  1  1  1  1  1  1  1  1  1]
\end{verbatim}
\end{tiny}
Each row $i$ for $i$ in $[n]$ corresponds to a facet and each column corresponds to a vertex.  
The nonzero entries in a column $v$ correspond to the facets that contain vertex $v$.  An entry in position $(i,v)$ is $-1$ if 
the edge incident to vertex $v$ with only one end in facet $i$ is directed away from facet $i$.  If it is $+1$ then that edge is directed toward $v$.  

For example, column 3 of the matrix is $(0,1,-1,-1,1,0,0,0,-1,0)$. With the last two entries added, we have the circuit $(0,+,-,-,+,0,0,0,-,0,+,?)$ and cocircuit $(+,0,0,0,0,+,+,+,0,+,?,+)$ This implies, by Theorem \ref{circchiro}, the following conditions on $\chi:$ \newline
$\chi(2,3,4,5,9,1)=\chi(2,3,4,5,9,12),$\newline
$\chi(2,3,4,5,9,6)=\chi(2,3,4,5,9,12),$\newline
$\chi(2,3,4,5,9,7)=\chi(2,3,4,5,9,12),$\newline
$\chi(2,3,4,5,9,8)=\chi(2,3,4,5,9,12),$\newline
$\chi(2,3,4,5,9,10)=\chi(2,3,4,5,9,12),$\newline
from the cocircuit, and \newline
$\chi(2,3,4,5,9,12)=-\chi(11,3,4,5,9,12),$\newline
$\chi(2,3,4,5,9,12)=\chi(2,11,4,5,9,12),$\newline
$\chi(2,3,4,5,9,12)=\chi(2,3,11,5,9,12),$\newline
$\chi(2,3,4,5,9,12)=-\chi(2,3,4,11,9,12),$\newline
$\chi(2,3,4,5,9,12)=\chi(2,3,4,5,11,12),$\newline
from the circuit.  Each of these conditions can be enforced by two clauses with two literals each.  (See \cite{BremnerSchewe} for details).

We gathered the 20 clauses for each of the columns of the facet-vertex matrix, combined them with the chirotope constraints, and fed the resulting conjunctive normal form statement to the satisfiability solver SAT in cocalc.com.  

The satisfiability solver reported in less than a minute that there is no function $\chi$ satisfying both the chirotope conditions of Definition \ref{chirodef} and the conditions implied by the orientation.  The problem of completability of a partial chirotope is known to be NP-complete, see \cite{Tschirschnitz}.

\section{Realizable and Non-realizable Oriented Matroid Programs}
\subsection{Matroid Polytopes}
 Every oriented matroid program that we consider is defined by a uniform rank $r$ chirotope $\chi$ defined as in Definition \ref{chirodef} on the set $E=[n] \cup \{f,g\}$ with $r=d+1$. This chirotope yields, for each $n-d+2$ element subset $\underline D$ of $E$, two sign vectors with support $\underline D$ according to condition $3$ of Theorem \ref{circchiro}.  Each of the two sign vectors is the negative of the other.  The collection of all of these sign vectors is the set of cocircuits of the oriented matroid ${\mathcal M}$ defined by $\chi$.  

The cocircuits that are positive on element $g$, positive on a set $A$ of $n-d$ elements of $[n]$, and are equal to some nonzero sign on element $f$ are the {\it vertices} of the oriented matroid program defined by ${\mathcal M}$. We will refer to a vertex by the $d$-element set $[n]\backslash A$. By condition 3 of Theorem \ref{circchiro}, $\chi([n]\backslash A,i)=\chi([n]\backslash A,g)$ 
for each $i \in A$, where we assume $[n]\backslash A$ is ordered naturally.

If all of the cocircuits of ${\mathcal M}$ that are positive on an $(n-d)$-element subset of $[n]$ and contain $\{f,g\}$ in their support are also positive on element $g$, the oriented matroid program is said to be {\it bounded}. The collection of all cocircuits of ${\mathcal M}$ that are positive on an $(n-d)$-element subset of $[n]$ and contain $\{f,g\}$ in their support yields the set of vertices of a {\it matroid polytope.}  These can be thought of as the positive cocircuits of the oriented matroid ${\mathcal M}\backslash \{f,g\}$ of rank $d+1$ on $[n]$ obtained from ${\mathcal M}$ by deleting elements $f$ and $g$.  This is the structure used by \cite{BremnerSchewe} and \cite{BDHW} in studying the non-monotone diameters of polytopes. 

\begin{figure}[h]
\setlength{\unitlength}{0.10in} 
\centering 
\begin{picture}(65,25)

\put(14,5){\circle*{0.8}}
\put(24,5){\circle*{0.8}}
\put(14,15){\circle*{0.8}}\put(13,4){\makebox(0,0){v}}
\put(24,15){\circle*{0.8}}
\put(19,10){\circle*{0.8}}
\put(29,10){\circle*{0.8}}\put(30,21){\makebox(0,0){w}}
\put(19,20){\circle*{0.8}}
\put(29,20){\circle*{0.8}}

\thicklines
\put(14,5){\vector(1,0){5}}
\put(14,5){\vector(0,1){5}}
\put(14,5){\vector(1,1){3}}
\put(24,15){\vector(-1,0){5}}
\put(19,10){\vector(1,0){5}}
\put(29,10){\vector(-1,-1){3}}
\put(14,15){\vector(1,1){3}}
\put(19,20){\vector(0,-1){5}}
\put(24,5){\vector(0,1){5}}
\put(19,20){\vector(1,0){5}}
\put(24,15){\vector(1,1){3}}
\put(29,10){\vector(0,1){5}}

\put(19,5){\line(1,0){5}}
\put(14,10){\line(0,1){5}}
\put(17,8){\line(1,1){2}}
\put(19,15){\line(0,-1){5}}
\put(24,10){\line(1,0){5}}
\put(26,7){\line(-1,-1){2}}
\put(17,18){\line(1,1){2}}
\put(19,15){\line(-1,0){5}}
\put(24,10){\line(0,1){5}}
\put(24,20){\line(1,0){5}}
\put(27,18){\line(1,1){2}}
\put(29,15){\line(0,1){5}}
\end{picture}

\caption{Oriented matroid program with a monotone cycle} 
\label{fig:lnlblock} 
\end{figure}
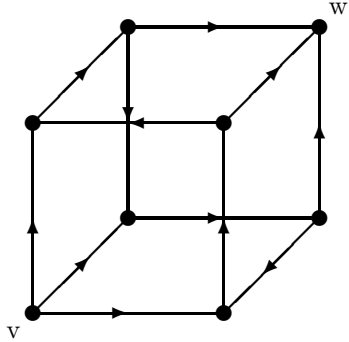

If one is not assuming boundedness of the oriented matroid program, one needs 
to keep the element $g$ and consider the positive cocircuits of the oriented matroid ${\mathcal M}\backslash \{f\}$. The vertices of the oriented matroid polytope ${\mathcal M}\backslash \{f,g\}$ corresponding to cocircuits of ${\mathcal M}\backslash \{f\}$ with $g$ negative (vertices ``beyond infinity") will not be vertices of the oriented matroid program.  In both the bounded and unbounded case, the positive cocircuits of ${\mathcal M}\backslash \{f\}$ containing $g$ define the undirected graph of the oriented matroid program.  

\subsection{Orientation of the Graph of the Matroid Polytope}

For every vertex $[n]\backslash A$ determined by an $(n-d)$-element subset $A$ of $[n]$, there is a unique circuit $C([n]\backslash A)$ (obtained from $\chi$ by condition 2 of Theorem \ref{circchiro}) with support $([n]\backslash A) \cup \{f,g\}$ that is positive on element $f$.  The set of these circuits determines the orientation of the digraph of the oriented matroid program. If a set $A' \subseteq [n]$ is such that $|A \cap A'|=n-d-1$ and $[n]\backslash A'$ is also a vertex of the oriented matroid program, then the arc containing the vertices $[n]\backslash A$
and $[n]\backslash A'$ leaves $[n]\backslash A$ if and only if the element $A'\backslash A$ is negative in $C([n]\backslash A)$.  Because the sign of $g$ in $C([n]\backslash A)$ is irrelevant to determining the orientation of the arcs incident to $[n]\backslash A,$ the orientation can be seen as being determined by the oriented matroid ${\mathcal M}/\{g\}$, obtained from ${\mathcal M}$ by contracting $g$.

An oriented matroid program obtained from a chirotope $\chi$ is {\it realizable} if its cocircuits are the sign vectors with minimal support in the null space of a matrix $T$.  An example of the digraph of an oriented matroid program that is not realizable appears in Figure 1.  Note that the six vertices other than the source $v$ and the sink $w$ form a directed cycle.  If the oriented matroid program were realizable, these vertices would give a cycle of nondegenerate pivots, which is not possible.

The orientation of Figure 1 has some properties one would expect from admissible functions on a polytope, such as a unique source and sink on every face.  On the other hand, none of the facets incident to vertex $w$ has its source adjacent to $v$.

We call an oriented matroid program {\it acyclic} if its digraph does not contain a directed cycle. We state the following lemma without proof.

\begin{lemma}
If an oriented matroid program is acyclic, vertex $v$ is the source of its digraph, and vertex $v'$ is adjacent to $v$ in some topological sweep of the digraph, then $v'$ is the source of the digraph of a facet not containing $v$.
\end{lemma}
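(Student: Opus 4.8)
We sketch how we would prove the lemma. The plan is to produce the facet explicitly and verify the source condition by a direct bookkeeping with facet sets, using as the only structural input the fact (recorded in the introduction, and one of the Holt--Klee type conditions satisfied by these digraphs) that the digraph of an oriented matroid program has a unique source on every face.

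First I would unwind the hypothesis. Since the program is acyclic and $v$ is its source, a topological sweep is simply a linear extension of the digraph, and (the source being unique) it must begin with $v$; saying that $v'$ is adjacent to $v$ in such a sweep means that $v'$ is the second vertex swept. Consequently every arc of the digraph entering $v'$ must originate at a vertex preceding $v'$ in the sweep, i.e.\ at $v$. Because the matroid polytope is simple there is at most one edge between $v$ and $v'$, and because $v'\neq v$ is not the source it has at least one entering arc; hence $v'$ has exactly one entering arc, namely $v\to v'$, and each of the other $d-1$ edges incident to $v'$ is directed away from $v'$.

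Next I would locate the facet. Viewing $v$ and $v'$ as $d$-element subsets of $[n]$, namely their facet sets, adjacency forces them to share exactly $d-1$ facets, those containing the edge $vv'$; call them $G_1,\dots,G_{d-1}$. Let $H$ be the remaining facet of $v$ and $H'$ the remaining facet of $v'$; then $H'\neq H$ (else $v=v'$), so $v'\in H'$ while $v\notin H'$, and $F:=H'$ is the only candidate for a facet through $v'$ avoiding $v$. In the circuit description of Section~3, $H'$ is precisely the facet of $v'$ one leaves when traversing the edge $vv'$ out of $v'$ (equivalently $\{H'\}=A\setminus A'$ where $v=[n]\setminus A$, $v'=[n]\setminus A'$), so $vv'$ is the unique edge at $v'$ not lying on $F$, and the remaining $d-1$ edges at $v'$ all lie on $F$. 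By the first step these $d-1$ edges all leave $v'$, so $v'$ is a source of the digraph of $F$. Finally, the digraph of $F$, being a sub-digraph of an acyclic digraph, is acyclic, and it is itself the digraph of a $(d-1)$-dimensional oriented matroid program, so by the unique-source-on-every-face property it has a unique source; that source must therefore be $v'$.

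I expect the main obstacle to be conceptual rather than computational: recognizing that the right facet is $H'$, the facet of $v'$ ``opposite'' the edge back to $v$, and not one of the $d-1$ facets shared with $v$ (on those, $v'$ is in general not a source, since the orientation of the corresponding edges at $v'$ is not constrained by the hypothesis). Once the facet is pinned down, the only genuinely external ingredient needed is the unique-source-on-faces property, and the rest is the bookkeeping above.
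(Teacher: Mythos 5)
The paper states this lemma explicitly without proof, so there is no argument of the authors to measure yours against; what you propose is almost certainly the intended argument, since it mirrors exactly how the lemma is used in Lemma \ref{acycle6} and in the proof of Lemma \ref{foureight}. Your bookkeeping is correct: because $v'$ is swept immediately after the unique source $v$, its only possible in-arc is $v\to v'$; in the simple (uniform) setting $v$ and $v'$ share exactly $d-1$ facets, the edge $vv'$ is precisely the edge at $v'$ omitting the one facet $H'=A\setminus A'$ of $v'$ that does not contain $v$, so the remaining $d-1$ edges at $v'$ all lie on $H'$ and are all directed away from $v'$; hence $v'$ has in-degree $0$ in the digraph of a facet not containing $v$. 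The one step you should not treat as free is the closing appeal to ``a unique source on every face'': the paper records this only as an observed property of particular orientations (Holt--Klee's proposed orientation, Figure~1), not as a theorem about arbitrary acyclic oriented matroid programs, and at that generality uniqueness of the source of the facet program needs either a proof or a citation (the facet digraph is the digraph of a lower-dimensional oriented matroid program, and acyclicity alone only guarantees the existence of some vertex of in-degree $0$). In the two places the paper invokes the lemma the program is realizable, so the facet carries an admissible linear functional and uniqueness is classical; if you either restrict to that setting, supply a uniqueness argument for the general bounded acyclic case, or read the conclusion as ``$v'$ is a source (in-degree $0$) of the digraph of a facet not containing $v$,'' your proof is complete and is, as far as one can tell, the argument the authors had in mind.
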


We find it intriguing that for each of the values of $\Delta_{sm}$ and $\Delta_m$ that we have determined, the lower bound, which is attained by a realizable oriented matroid program, is matched by an upper bound that is obtained by searching for all oriented matroid program counterexamples, whether realizable or non-realizable. This happens despite the early stage at which non-realizable oriented matroid programs can be constructed.

For the rest of the paper, we will refer to statements proved by computer as Theorems and those proved without the use of computer as Lemmas.

\section{Bremner and Schewe's method}

If an orientation of a 5-polytope were a counterexample to the strictly monotone 5-step conjecture, then \cite{HK} shows that we could assume that the source and sink do not share any facets.  In particular, we could assume that the source of the orientation would be on facets $1,2,3,4,5$ and that its sink would be on facets $6,7,8,9,10$.  
Such a counterexample would lead to a function $\chi$ satisfying 10 constraints each for these two vertices.  The constraints
$$\chi((1,2,3,4,5),11)=\chi((1,2,3,4,5)_{k,12},11)$$ for $k=1,2,3,4,5$, together imply that $[1,2,3,4,5]$ is a vertex of the oriented matroid program, where $(1,2,3,4,5)_{k,12}$ is the ordered set $(1,2,3,4,5)$ with entry $k$ replaced by 12. Similarly, the constraints 
$$\chi((6,7,8,9,10),11)=\chi((6,7,8,9,10)_{k,12},11)$$ for $k=6,7,8,9,10$ imply that $[6,7,8,9,10]$ is a vertex.

The constraints $$\chi((1,2,3,4,5),12)=\chi((1,2,3,4,5)_{k,11},12),$$ for $k=1,2,3,4,5$, ensure that all of the edges containing vertex $[1,2,3,4,5]$ 
are oriented away from that vertex, while the constraints  $$\chi((6,7,8,9,10),12)=-\chi((6,7,8,9,10)_{k,11},12),$$ orient the edges containing $[6,7,8,9,10]$ toward $[6,7,8,9,10]$.  

We also 
have to introduce constraints to avoid each possible monotone path $v_0,v_1,\ldots,v_5$ from source to sink.  Each such path can be described by an ordered pair of permutations $(p,q)$, where $p$ is an ordering of the facets on the source and $q$ is an ordering of the facets on the sink.  The permutation $p$ indicates the order 
in which facets $1,2,3,4,5$ leave the nonbasis and $q$ indicates the order 
in which facets $6,7,8,9,10$ enter the nonbasis as one follows the path from $v_0$ to $v_5$.  We also assume that $\chi(1,2,3,4,5,6)=+1$.  The assumption that a particular pair $(p,q)$ yields a path from source to sink implies a conjunction of $\chi$ values for all the bases appearing in the conditions determined by the cocircuits for vertices on the path. This conjunction is described in detail in \cite{BremnerSchewe}.  For each $i$ from 1 to 4, we have to augment the conjunction with a $\chi$ value that determines the orientation of the edge $\{v_i,v_{i+1}\}$, in order to make the path monotone.  This conjunction is then negated, in order to ensure that the given length 5 monotone path does not appear.  The negation of the conjunction is a disjunction, which is added to the constraints given by the source and sink and the chirotope constraints.  

If we know that there is no $\chi$ satisfying the set of constraints we have described, then we would know there is no counterexample.  With generally available computing resources (solver SAT on cocalc.com), these constraints are quickly shown to be unsatisfiable for the $d=4$ analog of the problem.  The solver bogs down, however, when asked to verify the strictly monotone 5-step conjecture. 

Thus we need to divide the problem into parts.  We do this by specifying a path of length greater than 5 and forcing it to appear in the orientation.  The presence of the additional constraints determined by such a path lead the solver to determine unsatisfiability in 3 to 4 minutes.  

\begin{lemma} \label{acycle6}
$\Delta_{sm}(5,10)\le 6$.
\end{lemma}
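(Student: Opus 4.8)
\section*{Proof proposal for Lemma \ref{acycle6}}

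The plan is to argue directly with pivots and sign vectors, constructing a short monotone path rather than bounding a given one. By \cite{HK} we may assume $P$ is a simple $5$-polytope with facets labelled $1,\dots,10$, with $f$ admissible, the source $v_0$ the unique vertex lying on facets $1,2,3,4,5$, and the sink $v_N$ the unique vertex lying on facets $6,7,8,9,10$, so that $v_0$ and $v_N$ share no facet; the goal is an $f$-monotone directed path from $v_0$ to $v_N$ of length at most $6$. I would track the potential $\phi(w)=|\{\,i\in\{6,\dots,10\}:w\in F_i\,\}|$, with $\phi(v_0)=0$ and $\phi(v_N)=5$, noting that one pivot changes $\phi$ by $-1$, $0$, or $+1$ since it swaps one facet of the current $5$-set for another. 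Call an out-edge \emph{advancing} if its pivot drops a facet in $\{1,\dots,5\}$ and admits one in $\{6,\dots,10\}$, so that it raises $\phi$. Starting at $v_0$---all of whose out-edges are advancing, since the pivot must remove one of $1,\dots,5$ from the nonbasis and, $P$ being simple with exactly $10$ facets, must insert one of $6,\dots,10$---I would always follow an advancing out-edge when one is available; as $\phi$ must climb from $0$ to $5$, a chain of advancing pivots, once continued to the end, arrives at $v_N$ in exactly five steps.

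The crux is the stall: a vertex $w$ with $\phi(w)=a<5$ having no advancing out-edge. I would show that a single non-advancing pivot always suffices to escape, after which advancing pivots can again be chained to $v_N$, for total length at most $6$. The ingredients I would use are the unique-source/sink property on every face---so that, since $w\neq v_N$, the vertex $w$ is not the sink of any facet $F_j$ with $j\in\{6,\dots,10\}$ through it and hence has an out-edge remaining on $F_j$---the connectedness of the super-level sets of $f$ restricted to any facet, and the equality $n=2d$: at a stalled vertex the $a$ sink-facets and the $5-a$ source-facets through $w$ together exhaust a whole side of the labelling, so the failure of every out-edge to be advancing pins the local sign pattern down tightly enough to name one corrective pivot.

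The step I expect to be the genuine obstacle is exactly this stall analysis: proving that no stall ever costs more than one wasted pivot. Pure bookkeeping on facet sets cannot settle it, since the non-realizable oriented matroid program of Figure~\ref{fig:lnlblock} obeys the face-wise source/sink axioms and yet has its non-extreme vertices forming a monotone cycle, so those axioms alone permit arbitrarily wasteful behaviour; the argument must use that $P$ is an honest polytope---acyclic digraph, nondegenerate pivots---together with the connectivity and face-wise facts above. This is also why the method stops at $6$ rather than at the sharp value $5$: it controls all but one wasted pivot, and excising that last one is precisely what is left to the computer enumeration.
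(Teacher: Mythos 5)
Your proposal is not a proof: the step you yourself flag as ``the genuine obstacle'' --- that a stalled vertex can always be escaped with a single non-advancing pivot, after which advancing pivots can be chained all the way to the sink --- is exactly the content of the lemma, and nothing in your sketch establishes it. Note what the claim would require: the corrective pivot must keep your potential $\phi$ constant (if it drops $\phi$ you are already at length $7$), and the vertex it reaches must admit a \emph{complete} advancing chain with no further stall; a priori a second stall could occur immediately, and your local ingredients (unique source/sink on faces, an out-edge remaining on each sink-facet through $w$, $n=2d$) do not rule this out --- an out-edge staying on $F_j$ can perfectly well swap one facet of $\{1,\dots,5\}$ for another, or drop a different sink-facet, and says nothing about where the next stall happens. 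So the sketch is circular at its crux: ``at most one wasted pivot'' is essentially a restatement of $\Delta_{sm}(5,10)\le 6$. (Your closing remark also misplaces the division of labor: the paper's bound of $6$ is obtained without the computer; the computer is needed only to get from $6$ down to $5$.)

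The paper's argument avoids any local pivot bookkeeping by a global decomposition that you should compare with. Take a topological sweep $L$ of the (acyclic, since realizable) digraph and let $w$ be the first vertex of $L$ lying on two of the sink's facets, say $6$ and $7$. Every vertex before $w$ in $L$ lies on at most one of $6,\dots,10$, hence on at least four of $1,\dots,5$, hence is $v_0$ or a neighbor of $v_0$; since some in-neighbor of $w$ precedes it in the sweep, there is a monotone path of length at most $2$ from $v_0$ to $w$. Moreover $w$ is the \emph{source} of the $3$-dimensional face $[6,7]$ (any in-neighbor of $w$ on that face would precede $w$ in the sweep and lie on two sink-facets, contradicting the choice of $w$). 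The face $[6,7]$ has at most $8$ facets, so the Holt--Klee bound $\Delta_{sm}(3,n)\le\lfloor 2n/3\rfloor-1$ gives a monotone path of length at most $4$ from $w$ to its sink $[6,7,8,9,10]$, for a total of $2+4=6$. The two essential tools --- the topological sweep forcing $w$ to be a face source, and the known three-dimensional strict-monotone bound applied to $[6,7]$ --- are what your potential-function plan lacks, and without some substitute for them the stall analysis has no purchase.
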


\begin{proof}   Suppose that $L$ is a topological sweep of the acyclic digraph determined by an admissible objective function on a 5-dimensional polytope with 10 facets $1,2,\ldots,10$.  Suppose that source $v_0$, the first vertex in $L$, is on facets $1,2,3,4,5$ and the sink is on facets $6,7,8,9,10.$ Let $w$ be the first vertex in $L$ that is on two of the facets in $\{6,7,8,9,10\}$.  We can assume that $w$ is on the facets 6 and 7.  Then each of the vertices in the set $L_w$ of vertices in $L$ before $w$ is either $v_0$ or a neighbor of $v_0$. The arcs between $v_0$ and its neighbors are all oriented away from $v_0$, and there is an arc from a vertex in $L_w$ to $w$.  Thus there is a monotone path of length 2 from $v_0$ to $w$. Also, as in Lemma 3.1, $w$ is the source of the face $[6,7]$ that is the intersection of facets $6$ and $7$. The face $[6,7]$ is 3-dimensional with at most 8 facets. By \cite{HK} we can conclude that there is a monotone path of length at most $\lfloor\frac{2(8)}{3}\rfloor - 1 = 4$ from the source of face $[6,7]$ to its sink $[6,7,8,9,10]$. Combining these directed paths gives us a monotone path of length at most $2+4=6$ from $v_0$ to $[6,7,8,9,10]$.  
\end{proof}

\begin{theorem}\label{5step}
$\Delta_{sm}(5,10)=5$.
\end{theorem}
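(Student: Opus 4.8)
The plan is to combine Lemma~\ref{acycle6} with a computer enumeration that rules out the value $6$. The lower bound is not the issue: since $\Delta(d,n)\le\Delta_{sm}(d,n)$ and $\Delta(5,10)=5$ is classical, we already have $\Delta_{sm}(5,10)\ge 5$, and this is in fact attained by an explicit realizable oriented matroid program. So the whole content is the upper bound, and by Lemma~\ref{acycle6} it suffices to rule out a simple $5$-polytope $P$ with facets $1,\dots,10$ carrying an admissible functional whose source is the vertex $[1,2,3,4,5]$, whose sink is the vertex $[6,7,8,9,10]$, and for which the shortest monotone path from source to sink has length $6$. Since the source and sink share no facet, while every vertex of a simple $5$-polytope lies on exactly $5$ facets and adjacent vertices share $4$, any monotone source-to-sink path already has length at least $5$; such a $P$ is therefore exactly one that possesses a monotone path of length $6$ but none of length $5$.

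First I would pass from $P$ to the rank-$6$ chirotope $\chi$ on $[12]$ arising from its simplex tableaux (Theorem~\ref{circchiro}) and encode the above as the satisfiability of a conjunctive normal form statement over the variables $\chi(S)$, $S\in\binom{[12]}{6}$, in the manner of the ``Bremner and Schewe's method'' section: the chirotope constraints of Definition~\ref{chirodef}; the ten equalities that make $[1,2,3,4,5]$ and $[6,7,8,9,10]$ vertices; the ten constraints that orient the edges at $[1,2,3,4,5]$ outward and those at $[6,7,8,9,10]$ inward; the normalization $\chi(1,2,3,4,5,6)=+1$; and, for every ordered pair $(p,q)$ of permutations of the source facets and the sink facets, the clause forbidding the length-$5$ monotone path described by $(p,q)$. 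A counterexample $P$ produces a $\chi$ satisfying all of these (the last family vacuously, since $P$ has no length-$5$ monotone path), so unsatisfiability of this system would finish the proof. The difficulty --- and the reason the statement is a Theorem rather than a Lemma --- is that the solver does not terminate on this system directly.

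The way around this is a case split. Every length-$6$ monotone path from $[1,2,3,4,5]$ to $[6,7,8,9,10]$, over its six steps, removes facets $1,\dots,5$ and introduces facets $6,\dots,10$ with exactly one facet handled twice (either a sink facet is entered, left, and re-entered, or a source facet is left, re-entered, and left again), so there are only finitely many combinatorial types, and symmetry cuts this down further. For each type I would append to the clause set the conjunction of $\chi$-values that forces a path of that type to occur --- the cocircuit conditions at its seven vertices together with the sign conditions that make its internal edges monotone, exactly as detailed in \cite{BremnerSchewe} --- and give the augmented statement to the SAT solver. Any counterexample contains at least one length-$6$ monotone source-to-sink path, hence satisfies one of these augmented systems; conversely, the extra clauses shrink the search enough that the solver reports unsatisfiability within a few minutes for each type. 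As all cases are unsatisfiable, no rank-$6$ chirotope, hence no realizable oriented matroid program, hence no polytope, can witness $\Delta_{sm}(5,10)\ge 6$; so $\Delta_{sm}(5,10)\le 5$, and with the lower bound, $\Delta_{sm}(5,10)=5$.

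I expect the genuine obstacle to be the bookkeeping in the last step: arranging the case split so that it is simultaneously exhaustive and small enough to run, and attaching to each forced path precisely the $\chi$-conjunction dictated by the cocircuits of its vertices and the monotonicity of its edges, so that the union with the ``no length-$5$ path'' clauses really is equivalent to the existence of the polytope being excluded. A subtly wrong encoding would either leak a case --- an incomplete proof --- or overconstrain one into a vacuous UNSAT; re-running on the $d=4$ analogue, where the direct system is already unsatisfiable, is the natural check.
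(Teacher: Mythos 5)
Your proposal follows the paper's proof essentially verbatim: lower bound from $\Delta(5,10)=5$, reduction to length-$6$ paths via Lemma~\ref{acycle6}, the Bremner--Schewe SAT encoding with chirotope clauses, source/sink cocircuit and orientation constraints, exclusion of all length-$5$ paths, and a case split that forces each combinatorial type of length-$6$ path (each having exactly one revisited facet, with a reversal symmetry halving the cases). The paper carries this out concretely by listing the eight (up to symmetry, four) path types and reporting unsatisfiability for each, which is the bookkeeping step you correctly identify as the remaining work.
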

\begin{proof}
The paper \cite{BremnerSchewe} discusses the problem of listing representatives of all the combinatorial types of shortest paths of a given length between two fixed vertices of a polytope.  In view of Lemma \ref{acycle6}, we only need to look at paths of length 6.  Some guidelines make the listing easier.  
\begin{enumerate}
\item If a vertex $v_i$ is on a facet $F$ that did not contain $v_{i-1}$, then $v_{i+1}$ must also be on $F$. Otherwise, $v_{i-1},v_{i},v_{i+1}$ would all be on an edge.  
\item If a vertex $v_i$ is not on a facet $F$ that contained $v_{i-1},$ then $v_{i+1}$ can not be on $F$.  Otherwise, we could go from $v_{i-1}$ to $v_{i+1}$ in one step.  
\item The neighbors of the source may only appear as $v_1$ and the neighbors of the sink may only appear as $v_{k-1}$.
\end{enumerate}

The paths of length 6 are:
\newline
[[1,2,3,4,5],[6,2,3,4,5],[6,7,3,4,5],[8,7,3,4,5],[8,7,9,4,5],[8,7,9,6,5],[8,7,9,6,10]],\newline
[[1,2,3,4,5],[6,2,3,4,5],[6,7,3,4,5],[8,7,3,4,5],[8,7,9,4,5],[8,7,9,10,5],[8,7,9,10,6]],\newline
[[1,2,3,4,5],[6,2,3,4,5],[6,7,3,4,5],[6,7,8,4,5],[9,7,8,4,5],[9,7,8,10,5],[9,7,8,10,6]],\newline
[[1,2,3,4,5],[6,2,3,4,5],[6,7,3,4,5],[6,7,8,4,5],[6,9,8,4,5],[6,9,8,10,5],[6,9,8,10,7]],\newline
[[1,2,3,4,5],[6,2,3,4,5],[6,7,3,4,5],[6,7,1,4,5],[6,7,1,8,5],[6,7,9,8,5],[6,7,9,8,10]],\newline
[[1,2,3,4,5],[6,2,3,4,5],[6,7,3,4,5],[6,7,1,4,5],[6,7,1,8,5],[6,7,1,8,9],[6,7,10,8,9]],\newline
[[1,2,3,4,5],[6,2,3,4,5],[6,7,3,4,5],[6,7,8,4,5],[6,7,8,1,5],[6,7,8,1,9],[6,7,8,10,9]],\newline
[[1,2,3,4,5],[6,2,3,4,5],[6,7,3,4,5],[6,7,8,4,5],[6,7,8,2,5],[6,7,8,2,9],[6,7,8,10,9]].

The shortest monotone paths of length 6 involve only one revisit, that is, there is only one facet that both enters and leaves vertices of the sequence. The enumeration of these is analogous to that in \cite{BremnerSchewe}.  As noted there, there is a correspondence between the first 4 paths, for which one of the facets $6,7$ was revisited, and the last 4 in which one of the facets $1,2$ was revisited.  The last 4 may be thought of as reversing the first 4, so one does not need to list them.  

For each of the paths of length 6 that we added to the conditions, the additional constraints allowed the solver to determine unsatisfiability in 3 to 4 minutes.  \end{proof}
\section{Related results}
\subsection{$\Delta_{sm}(5,10)=5$ for general oriented matroid programs}

Lemma \ref{acycle6} assumed acyclicity of the oriented matroid digraph, which holds in the realizable case but not in general.  The proof of Theorem \ref{5step} shows that there is no oriented matroid program counterexample with shortest path of length 6.  One still has to account for examples that are not acyclic and have shortest paths of length greater than 6.  We shall first show that it is sufficient to exclude examples for which the shortest monotone path is of length 7.

\begin{lemma}\label{delmfiveten} 
$\Delta_{m}(5,10)\le 7$.
\end{lemma}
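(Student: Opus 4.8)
The plan is to mimic the structure of the proof of Lemma~\ref{acycle6}, but now working with a general (possibly cyclic) oriented matroid program and with the weaker target of a bound on the \emph{monotone} diameter $\Delta_m$, i.e.\ on the shortest monotone path between an arbitrary pair of vertices rather than between source and sink. Suppose $u$ and $v$ are two vertices of a $5$-polytope with $10$ facets (or, more generally, of the digraph of an oriented matroid program with $d=5$, $n=10$), chosen so as to realize the monotone diameter. Since the source and sink of the whole polytope realize the strict monotone diameter, and we already know $\Delta_{sm}(5,10)\le 6$ in the acyclic case by Lemma~\ref{acycle6}, the only way to exceed $6$ is with a pair $u,v$ that are not the global source/sink, or with a non-acyclic program; either way I would argue that $u$ and $v$ cannot share all of their facets and then, after relabelling, assume $u$ lies on facets $1,2,3,4,5$ and $v$ on facets $6,7,8,9,10$ (the generic case; degenerate overlaps only shorten the path and reduce dimension, and are handled by the monotone Holt--Klee bounds in lower dimension).

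Next I would localize along a shortest monotone path $u=v_0,v_1,\ldots,v_k=v$. Let $w$ be the first vertex on this path that lies on two of the facets $\{6,7,8,9,10\}$; after relabelling say $w\in F_6\cap F_7$, the $3$-face $[6,7]$. The initial segment $v_0,\ldots,w$ is a monotone path that uses only facets among $1,2,3,4,5$ plus at most one of $6,7$ at a time before reaching $w$, so each intermediate vertex differs from $v_0$ in only one facet — i.e.\ every $v_i$ with $v_i\neq v_0,w$ is a neighbour of $v_0$ on an edge $v_0v_i$. Because all edges at $v_0$ leave $v_0$ (it is a source, or at least the local minimum along this optimal path), this forces the segment $v_0\to w$ to have length at most $2$: $v_0\to v_1\to w$. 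Then $w$ and $v$ both lie in the $3$-face $[6,7]$, which has at most $2(10-5)-2=8$ facets (the facets $6,7$ are dropped, each other facet contributes at most one facet of the $3$-face). Applying the monotone $4$-dimensional / $3$-dimensional Holt--Klee results from \cite{HK} — specifically $\Delta_m$ of a $3$-polytope with at most $8$ facets is at most $\lfloor 2\cdot 8/3\rfloor - 1 = 4$, or whatever the correct monotone bound in that paper is — gives a monotone path of length at most $5$ from $w$ to $v$ inside $[6,7]$. Concatenating: $\le 2 + 5 = 7$.

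The main obstacle, and where I expect to spend the most care, is the step that bounds the first segment by length $2$ \emph{without} assuming acyclicity. In Lemma~\ref{acycle6} the topological sweep $L$ of an acyclic digraph is used crucially: ``the vertices of $L$ before $w$ are $v_0$ or neighbours of $v_0$'' relies on the sweep. For a general oriented matroid program the digraph may contain directed cycles, so there is no topological sweep, and a shortest monotone path from $u$ to $v$ could in principle wander through vertices that are not neighbours of $u$ while still touching only facets $1,2,3,4,5$. The fix is to argue combinatorially about \emph{faces} rather than about a sweep: the vertices $v_1,\ldots,v_{j-1}$ preceding $w$ all lie on at least four of the facets $1,2,3,4,5$ (they have touched at most one new facet, and that one must be $6$ or $7$, but reaching the \emph{second} of $6,7$ is exactly the event defining $w$), so each such $v_i$ lies in a common edge or $2$-face with $v_0$; on a minimal monotone path one cannot have three consecutive vertices in a common edge (guideline (2) in the proof of Theorem~\ref{5step}), which collapses the initial run to length $\le 2$. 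I would want to write this out carefully, checking the edge cases where $v_1$ already lies on one of $6,7$, and confirming the arithmetic ``$3$-face with $\le 8$ facets'' and the exact monotone Holt--Klee estimate being invoked, since the final bound $7$ has no slack.

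\begin{proof}[Proof sketch to be expanded]
Let $u,v$ be vertices realizing $\Delta_m(5,10)$. If they share a facet, that facet is a $4$-polytope with at most $9$ facets and $\Delta_m(4,9)=6$ finishes the argument, so assume $u$ is on $1,2,3,4,5$ and $v$ on $6,7,8,9,10$. Take a shortest monotone path $u=v_0,v_1,\ldots,v_k=v$ and let $w=v_j$ be the first vertex on two of the facets $6,7,8,9,10$, say $6$ and $7$. Each $v_i$ with $0<i<j$ touches at most one facet outside $1,2,3,4,5$ and hence shares at least four of $1,2,3,4,5$ with $v_0$, so lies in a common face of dimension $\le 2$ with $v_0$; by guideline (2) in the proof of Theorem~\ref{5step}, no three consecutive $v_i$ lie on a common edge, forcing $j\le 2$. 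The face $[6,7]$ is a $3$-polytope with at most $8$ facets containing $w$ and $v$, so by \cite{HK} there is a monotone path of length at most $\lfloor 2\cdot 8/3\rfloor-1=4$ from $w$ to $v$. Hence $k\le j+4\le 2+4$, and in the worst case where $j=2$ is forced we obtain $k\le 7$.
\end{proof}
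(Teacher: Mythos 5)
Your overall decomposition is the same as the paper's: show that a shortest monotone path reaches, after at most two steps, a vertex $w$ lying on two of the sink's facets (say $6$ and $7$), then bound the remaining path inside the $3$-face $[6,7]$, which has at most $8$ facets. Your treatment of the first segment is essentially the paper's argument and is sound: any intermediate vertex before $w$ lies on at least four of the facets $1,\dots,5$, hence on a common \emph{edge} with $v_0$ (dimension $\le 1$, not $\le 2$ as you wrote), so it is a neighbour of $v_0$; since the objective increases along the path, the edge from $v_0$ to it is directed forward, and minimality gives the shortcut. Your worry about acyclicity is beside the point: neither you nor the paper uses a topological sweep here; the only thing lost relative to Lemma \ref{acycle6} is that $v_0$ is not assumed to be the source, and the shortcut argument is exactly the replacement.

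The genuine gap is in the second segment. You bound the path from $w$ to the sink inside $[6,7]$ by the Holt--Klee quantity $\lfloor 2\cdot 8/3\rfloor-1=4$, hedging with ``or whatever the correct monotone bound in that paper is,'' and your sketch then concludes $k\le 2+4$ ``$\le 7$,'' which is not coherent. The bound $\lfloor 2n/3\rfloor-1$ from \cite{HK} is the \emph{strict} monotone bound, from the source of the face to its sink; it was applicable in Lemma \ref{acycle6} only because the sweep argument showed $w$ to be the source of $[6,7]$. Here $w$ need not be the source of $[6,7]$ (if your ``$4$'' were justified, you would have a computer-free proof of $\Delta_m(5,10)\le 6$, which the paper needs a substantial computation for). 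The missing ingredient is Klee's theorem \cite{Kl2} that $\Delta_m(3,n)\le n-3$: from an \emph{arbitrary} vertex of a $3$-polytope with at most $8$ facets there is a monotone path of length at most $5$ to its sink, giving $2+5=7$ as in the paper. A secondary problem: in the shared-facet case you invoke $\Delta_m(4,9)=6$, which in this paper is a later computer result deduced from the $\Delta_m(5,10)\le 6$ computation, itself resting on this lemma, so that reduction is circular; the paper sidesteps it by simply taking the vertex on facets $1,\dots,5$ and the sink on $6,\dots,10$, the only configuration needed for its later use of the lemma.
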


\begin{proof} Suppose that vertex $v_0$ is on facets $1,2,3,4,5$ and the sink is on facets $6,7,8,9,10,$ and the path $P=(v_0,v_1,\ldots,v_k)$ is a shortest monotone path from $v_0$ to the sink.  We do not assume that $v_0$ is the source of the orientation.  We can assume without loss of generality that $v_1$ is on facets $6,2,3,4,5.$ If $v_2$ is not on facet 6 or $v_2$ is on facet 1, then $v_2$ is a vertex adjacent to $v_0$ and could have been reached in one step rather than 2.  This contradicts the minimality of the path $P$.  Thus we can assume without loss of generality that $v_2$ is on facets 6 and 7, so that it shares a 3-dimensional face with the sink.  It was proved in \cite{Kl2} that $\Delta_m(3,n) \le n-3$ for all $n$. Thus there is a monotone path of length at most $8-3=5$ from $v_2$ to the sink. \end{proof}

Lemma \ref{delmfiveten} shows that a linear program with shortest monotone path of length 7 from $[1,2,3,4,5]$ to $[6,7,8,9,10]$ will have a path that starts with moving on to two of the facets in $\{6,7,8,9,10\}$, such as starting with $[1,2,3,4,5]$, $[6,2,3,4,5]$, $[6,7,3,4,5]$, and staying on facets $6$ and $7$ for the rest of the path.  Klee \cite{Kl2} shows that this path can be assumed to be {\it nonrevisiting}.  That is, the path stays on $6,7$ and the only facets that both enter and leave on the way from $[6,7,3,4,5]$ to $[6,7,8,9,10]$ are facets $1$ and $2$.  A closer look the proof of this theorem of \cite{Kl2} shows that its arguments can be directly applied to oriented matroid programs. The main reasons for this are that the undirected graphs of 3-dimensional matroid polytopes are realizable and directed graphs of 2-dimensional oriented matroid programs are realizable.  Thus we can assume that there is a shortest monotone path of length $7$ of one of the following types, starting with $[1,2,3,4,5],[6,2,3,4,5],[6,7,3,4,5]$: 
\newline
[[6,7,3,4,5],[6,7,8,4,5],[6,7,8,1,5],[6,7,8,1,2],[6,7,8,9,2],[6,7,8,9,10]],\newline
[[6,7,3,4,5],[6,7,8,4,5],[6,7,8,2,5],[6,7,8,2,1],[6,7,8,9,1],[6,7,8,9,10]],\newline
[[6,7,3,4,5],[6,7,1,4,5],[6,7,1,8,5],[6,7,1,8,2],[6,7,9,8,2],[6,7,9,8,10]],\newline
[[6,7,3,4,5],[6,7,1,4,5],[6,7,1,8,5],[6,7,2,8,5],[6,7,2,8,9],[6,7,10,8,9]],\newline
[[6,7,3,4,5],[6,7,1,4,5],[6,7,1,2,5],[6,7,1,2,8],[6,7,9,2,8],[6,7,9,10,8]],\newline
[[6,7,3,4,5],[6,7,1,4,5],[6,7,1,2,5],[6,7,8,2,5],[6,7,8,2,9],[6,7,8,10,9]],\newline
[[6,7,3,4,5],[6,7,1,4,5],[6,7,1,2,5],[6,7,1,2,8],[6,7,1,9,8],[6,7,10,9,8]],\newline
[[6,7,3,4,5],[6,7,1,4,5],[6,7,1,2,5],[6,7,8,2,5],[6,7,8,9,5],[6,7,8,9,10]]

For each of these paths, enforcing its presence, together with the chirotope clauses and the clauses that excluded paths of length 5, led the satisfiability solver to determine in 3 to 4 minutes that the instance was not satisfiable.
\subsection{Monotone Paths for Dimension 5}
Here we enforce the condition that $[6,7,8,9,10]$ is a sink, but for $[1,2,3,4,5]$ we only require that it be a vertex and do not specify the orientation of the edges incident to it.  This leaves open the possibility that the oriented matroid programs generated by the solver could be unbounded. 

\begin{theorem}
$\Delta_m(5,10)= 6$
\end{theorem}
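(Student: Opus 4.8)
The plan is to prove the two bounds $\Delta_m(5,10)\ge 6$ and $\Delta_m(5,10)\le 6$ separately; the second is where essentially all the work lies and is carried out by the satisfiability solver, as in Theorem \ref{5step} and the results above.

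The lower bound $\Delta_m(5,10)\ge 6$ is already known: Todd \cite{Todd} showed that the monotone bounded Hirsch bound fails in dimension $4$, and a wedge construction propagates such a counterexample into each higher dimension, giving $\Delta_m(d,2d)\ge d+1$ for every $d\ge 4$, and in particular $\Delta_m(5,10)\ge 6$. Alternatively, one records the facet--vertex data of the realizable oriented matroid program that the search turns up at the bound, and checks directly, from the circuit and cocircuit conditions of Theorem \ref{circchiro}, that it has two vertices whose shortest directed path has length $6$.

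For the upper bound, suppose toward a contradiction that some oriented matroid program of rank $6$ on $E=[10]\cup\{f,g\}$ has two vertices at monotone distance at least $7$. By the reductions of \cite{HK} we may relabel so that the first vertex is $[1,2,3,4,5]$ --- required only to be a vertex, with the orientation of its incident edges left free, so that unbounded programs are allowed --- and the target is the sink $[6,7,8,9,10]$; by Lemma \ref{delmfiveten} the distance is then exactly $7$. As explained above, a shortest such path may be taken to begin $[1,2,3,4,5],[6,2,3,4,5],[6,7,3,4,5]$ and then, invoking Klee's non-revisiting path theorem \cite{Kl2} --- whose argument transfers to this setting because the undirected graphs of $3$-dimensional matroid polytopes and the directed graphs of $2$-dimensional oriented matroid programs are realizable --- to stay on facets $6$ and $7$ and be non-revisiting for the remainder, which leaves exactly the eight path types listed above. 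For each of these eight types I would form the conjunctive normal form statement consisting of the chirotope constraints on $\chi\colon\binom{[12]}{6}\to\{-,+\}$ of Definition \ref{chirodef}, the clauses forcing $[6,7,8,9,10]$ to be a sink and $[1,2,3,4,5]$ to be a vertex, the clauses forcing the chosen length-$7$ path to appear, and the clauses forbidding every shorter monotone path from $[1,2,3,4,5]$ to the sink (reusing the path-exclusion clauses developed above). The solver then reports unsatisfiability in each of the eight cases, contradicting the existence of such a program; hence $\Delta_m(5,10)\le 6$, and with the lower bound the theorem follows.

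I expect the main obstacle to be twofold, as in the related results above: first, verifying that Klee's non-revisiting argument genuinely applies to oriented matroid programs that may be cyclic or unbounded, so that the enumeration of the eight length-$7$ path types is exhaustive; and second, keeping the eight satisfiability instances small enough to be decided in a few minutes, which is precisely why the computation is split over the individual path types rather than handed to the solver in a single piece.
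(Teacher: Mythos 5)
Your proposal takes essentially the same route as the paper: the lower bound by a standard lifting of Todd's $(4,8)$ example, and the upper bound by combining Lemma \ref{delmfiveten} with the Klee-style nonrevisiting enumeration of the eight length-$7$ path types, each enforced in a satisfiability instance together with the chirotope clauses, the sink/vertex conditions, and the exclusion of all monotone paths of length $5$ and $6$ from $[1,2,3,4,5]$ to $[6,7,8,9,10]$. One small correction: the paper lifts Todd's example by a \emph{prism} (which adds two facets, taking $(4,8)$ to $(5,10)$), whereas a wedge adds only one facet and so does not by itself produce a $(5,10)$ instance.
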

By standard methods, such as making a prism over Todd's example which shows 
$\Delta_m(4,8)\ge 5$, one can show that $\Delta_m(5,10)\ge 6.$  

Showing that $\Delta_m(5,10)\le 6$ pushes the limits of our computational resources.  One must exclude all monotone paths of length 5 and 6 from $[1,2,3,4,5]$ to $[6,7,8,9,10]$, and then enforce each of the paths of length 7 listed above.  The $8(5!)^2$ clauses to exclude paths of length 6 slowed down the computation considerably. The program did show that there was no oriented matroid program with shortest monotone path from $[1,2,3,4,5]$ to $[6,7,8,9,10]$ of length 7.  This includes nonrealizable oriented matroid programs as well as realizable ones, and programs with bounded as well as unbounded feasible regions.

\subsection{Dimension 4 with 8 facets}
\begin{lemma}
\label{foureight} $\Delta_{sm}(4,8)=4$.  
\end{lemma}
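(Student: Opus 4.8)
The plan is to prove $\Delta_{sm}(4,8)\ge 4$ and $\Delta_{sm}(4,8)\le 4$ separately, with essentially all of the content in the upper bound. For the lower bound I would simply take the $4$-cube $[0,1]^4$, a simple $4$-polytope with $8$ facets, together with the admissible functional $f(x)=8x_1+4x_2+2x_3+x_4$: its source is $(0,0,0,0)$ and its sink is $(1,1,1,1)$, every arc of the induced orientation flips one coordinate from $0$ to $1$, so the directed source--sink path has length $4$ and $\Delta_{sm}(4,8)\ge 4$.

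For the upper bound I would suppose toward a contradiction that some simple $4$-polytope $P$ with $8$ facets (simplicity costs nothing by \cite{HK}) and some admissible functional $f$ are such that the source $s$ and the sink $t$ are joined by no directed path of length at most $4$. If $s$ and $t$ lie on a common facet $F$, then $F$ is a $3$-polytope with at most $7$ facets on which $s$ is the unique source and $t$ the unique sink, and the Holt--Klee bound $\Delta_{sm}(3,n)\le\lfloor\frac{2n}{3}\rfloor-1$ already yields a directed $s$--$t$ path inside $F$ of length at most $\lfloor\frac{14}{3}\rfloor-1=3$, a contradiction (this is the reduction of \cite{HK}, immediate in this range). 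So $s$ lies on four facets, which I label $1,2,3,4$, and $t$ on the remaining four, $5,6,7,8$.

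Now fix a topological sweep of the digraph of $P$ and let $w$ be its first vertex lying on two of the facets $5,6,7,8$; relabel within $\{5,6,7,8\}$ so that $w$ is on facets $5$ and $6$. For $j\in\{5,6\}$ let $u_j$ be the source of the $3$-polytope $[j]$ (a facet of $P$ with at most $7$ facets), equivalently the first vertex of $[j]$ in the sweep. The crux is the following dichotomy. If $u_j\ne w$ for some $j\in\{5,6\}$, then $u_j$ strictly precedes $w$ in the sweep, so $u_j$ cannot lie on a second facet of $\{5,6,7,8\}$; hence $u_j$ lies on facet $j$ together with three of $1,2,3,4$, which makes it adjacent to $s$, and concatenating the arc $s\to u_j$ with a directed $u_j$--$t$ path inside $[j]$ of length at most $\Delta_{sm}(3,7)\le 3$ gives a directed $s$--$t$ path of length at most $4$, a contradiction. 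Hence $u_5=u_6=w$, so $w$ is the source of both $[5]$ and $[6]$. But if the four facets through $w$ are $5,6,a,b$, then every edge at $w$ stays on facet $5$ or on facet $6$, so all four edges at $w$ are directed outward; then $w$ has no entering arc, and since the $f$-minimizer of $P$ is the only such vertex, $w=s$, contradicting that $w$ lies on facet $5$. So no such $P$ exists, and $\Delta_{sm}(4,8)\le 4$.

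I expect the dichotomy in the previous paragraph to carry the proof: choosing $w$ to be the \emph{first} sweep vertex on two sink-facets is exactly what forces $u_5=u_6=w$ in the nontrivial branch and then exhibits a second local source, and this is the observation that should replace the case enumeration in the original Holt--Klee argument. The remaining points are routine checks I would still want to make explicit: that two vertices of a simple $4$-polytope sharing three facets are adjacent; that a face of $P$ inherits a unique source and sink and an admissible functional; that an acyclic digraph on a face admits a directed source-to-sink path; and --- essential for the numerics --- that the Holt--Klee inequality is invoked only on $3$-faces with at most $7$ facets, so that the bound actually used is $3$ rather than $4$.
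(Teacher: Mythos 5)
Your proof is correct, and its overall skeleton matches the paper's: reduce to the case where the source $[1,2,3,4]$ and sink $[5,6,7,8]$ share no facet, produce a neighbor of the source that is the source of a $3$-dimensional facet containing the sink, and concatenate one arc with a monotone path of length at most $\Delta_{sm}(3,7)=3$ inside that facet. Where you differ is in how that neighbor is located. The paper simply takes the second vertex $v$ of a topological sweep and invokes its Lemma 3.1 (stated there without proof): $v$ is the source of a facet not containing $[1,2,3,4]$, which must be one of $5,6,7,8$, and the proof is two sentences. You instead take $w$ to be the first sweep vertex on two sink-facets and run a dichotomy: either a facet source $u_j$ ($j\in\{5,6\}$) precedes $w$, hence lies on three of $1,2,3,4$ and is adjacent to the source, or $u_5=u_6=w$, in which case all four edges at $w$ lie on facet $5$ or $6$ and leave $w$, forcing $w$ to be a second global source --- a contradiction. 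Both routes are valid; yours is longer but self-contained (you never need the unproved Lemma 3.1, and your degenerate branch is in effect an inline proof of the relevant instance of it), and you also make explicit the reductions the paper delegates to Holt--Klee (the lower bound via the cube, and the case where source and sink share a facet). Note that your argument can be short-circuited into the paper's: if you take $w$ to be the second vertex of the sweep, its unique entering arc comes from the source and its remaining three edges all lie on its unique facet outside $\{1,2,3,4\}$, so it is that facet's source directly, and the dichotomy disappears. The routine checks you list (adjacency of two vertices sharing three facets of a simple $4$-polytope, inherited admissibility and unique source/sink on faces, and that the Holt--Klee bound is applied only to $3$-faces with at most $7$ facets) are all genuinely routine and do hold.
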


\begin{proof} This is a shortening of the proof given in \cite{HK}. There it was shown that $\Delta_{sm}(3,7) = \lfloor\frac{2}{3}(7)\rfloor -1 = 3.$  We are assuming that the oriented matroid program is realizable, therefore acyclic.  We can assume that we are looking for a monotone path from $[1,2,3,4]$ to $[5,6,7,8],$ where $[1,2,3,4]$ is the source and $[5,6,7,8]$ is the sink. Consider a vertex $v$ which is adjacent to the source in a topological sweep of the digraph.  Then $v$ is a neighbor of $[1,2,3,4]$, say $v= [5,2,3,4]$.  The vertex $v$ is then the source of facet $5$, by Lemma 3.1, so there is a monotone path from $v$ to $[5,6,7,8]$ (the sink of facet $5$) of length 3.  Thus we have a monotone path of length $4$ from $[1,2,3,4]$ to $[5,6,7,8]$.  
\end{proof}

We must point out that in the general setting of oriented matroid programming, we cannot assume that the source $[1,2,3,4]$ is adjacent to one of the facet sources from the facets on $[5,6,7,8].$  The best proof for the oriented matroid programming version of $\Delta_{sm}(4,8)=4$ remains that of \cite{HK}.

In order to express quantitatively how close $\Delta_{sm}(4,8)$ is to being 5, we had the computer search for examples with the shortest monotone path from $[1,2,3,4]$ to the sink $[5,6,7,8]$ of length 5, but with all possible placements of the source other than $[1,2,3,4]$, and all possible outmaps for the vertex $[1,2,3,4]$.  

\begin{theorem}\label{wheresource}
Suppose that the digraph contains a shortest path  of length 5 from  $[1,2,3,4]$ to $[5,6,7,8]$ and that $[5,6,7,8]$ is the sink of the orientation.    Each of the possible placements for the source shares at least two of the facets $5,6,7,8$ with the sink.  The  size of the outmap for $[1,2,3,4]$ is 2.
\end{theorem}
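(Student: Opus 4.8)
The plan is to settle this by computer, reducing the statement to a finite family of satisfiability instances in the spirit of the proof of Theorem~\ref{5step}. Fix the ground set $[10]$ with $f=9$ and $g=10$, so that an admissible orientation of a simple $4$-polytope with $8$ facets is encoded by a uniform rank-$5$ chirotope $\chi$ on the $5$-element ordered subsets of $[10]$. As a base instance I would take: the chirotope constraints of Definition~\ref{chirodef}; the cocircuit clauses making $[5,6,7,8]$ a vertex together with the circuit clauses that orient every edge incident to it toward $[5,6,7,8]$, so that $[5,6,7,8]$ is the sink; the cocircuit clauses making $[1,2,3,4]$ a vertex, but \emph{without} fixing the orientations of its incident edges; and, for every combinatorial type of monotone path of length at most $4$ from $[1,2,3,4]$ to $[5,6,7,8]$ — enumerated by the guidelines (1)--(3) used in the proof of Theorem~\ref{5step} — the disjunction obtained by negating the conjunction of $\chi$-values that would realize that path. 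This base instance encodes the assertion that the distance from $[1,2,3,4]$ to $[5,6,7,8]$ is at least $5$, which is implied by the hypothesis of the theorem; using only this weaker assertion can only strengthen the conclusions, so it is enough.

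On top of the base instance I would run two sweeps. For the placement of the source: for each $4$-element subset $S\subseteq[8]$ with $|S\cap\{5,6,7,8\}|\le 1$, I would add the cocircuit clauses making $S$ a vertex together with the circuit clauses forcing all four edges at $S$ to leave $S$, and submit the resulting conjunctive normal form statement to the solver; the expectation is that every such instance is reported unsatisfiable, so that no vertex sharing at most one facet with $[5,6,7,8]$ can be the source. For the outmap of $[1,2,3,4]$: for each subset $O\subseteq\{1,2,3,4\}$ with $|O|\ne 2$, I would add the circuit clauses fixing the orientations of the four edges at $[1,2,3,4]$ so that its outmap is exactly $O$ (the outmap being the set of facets $i$ among $1,2,3,4$ for which the edge of $[1,2,3,4]$ leading away from facet $i$ is directed out of $[1,2,3,4]$), and submit the instance to the solver. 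The case $O=\emptyset$ contradicts the hypothesis outright, since a vertex joined to $[5,6,7,8]$ by a directed path cannot itself be a sink; for $|O|\in\{1,3,4\}$ the expectation is an unsatisfiable verdict (the case $|O|=4$ being essentially the computation behind $\Delta_{sm}(4,8)=4$). Ruling out $|O|\in\{0,1,3,4\}$ leaves $|O|=2$. Because both sweeps range over all uniform rank-$5$ chirotopes on $[10]$, the conclusions hold for oriented matroid programs in general, whether realizable or not and whether bounded or not.

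The case enumeration here is small, so the work is in getting the encoding right. One must verify that the list of combinatorial types of monotone paths of length at most $4$ from $[1,2,3,4]$ to $[5,6,7,8]$ is complete — this is the analogue, for paths of length at most $4$, of the listing of length-$6$ paths in the proof of Theorem~\ref{5step}, and guidelines (1)--(3) keep it both finite and short — and that the geometric conditions ``$S$ is a source'' and ``$[1,2,3,4]$ has outmap $O$'' are translated faithfully into clauses on the $\chi$-variables through the circuit and cocircuit dictionary of Theorem~\ref{circchiro} and \cite{BremnerSchewe}. As the surrounding text already notes for the companion searches, the resulting instances sit close to the limit of the solver we use, so each call may run for a few minutes and the two sweeps together for correspondingly longer. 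I expect this running time, rather than any mathematical subtlety, to be the main practical obstacle; the point that most requires hand verification is the exhaustiveness of the path listing and the fidelity of the source and outmap encodings.
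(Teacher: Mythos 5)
Your proposal is essentially the paper's own proof: Theorem \ref{wheresource} is established there by exactly this kind of Bremner--Schewe satisfiability sweep (chirotope clauses, circuit/cocircuit clauses for the sink $[5,6,7,8]$, the vertex $[1,2,3,4]$, the excluded short paths, and the case being tested), run over all placements of the source and all outmaps of $[1,2,3,4]$. The one deviation is that you encode only ``no monotone path of length at most $4$'' rather than enforcing a shortest path of length exactly $5$ as the paper's search does; this is sound whenever the solver reports unsatisfiability (and for nonempty outmaps the relaxation is in fact harmless, since one outgoing step lands on a facet containing the sink and Klee's bound $\Delta_m(3,7)\le 4$ then caps the distance at $5$), but if some relaxed instance came back satisfiable --- e.g.\ via a non-acyclic program in which $[1,2,3,4]$ has no directed path to the sink at all --- the run would be inconclusive and you would need to add clauses enforcing one of the length-$5$ path types, exactly as the paper does in its other computations.
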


We see from these computer results that the examples that yield shortest paths of length 5 are far from having the source at $[1,2,3,4].$

\subsection{Dimension 4 with 9 facets}
\begin{lemma}
$\Delta_{sm}(4,9)\le 6$
\end{lemma}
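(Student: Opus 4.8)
The plan is to imitate the proof of Lemma~\ref{acycle6} one dimension down. The one new feature is that a $4$-polytope with $9$ facets carries a facet lying on neither the source nor the sink, and handling this spare facet is exactly what turns the ``$2+4$'' of Lemma~\ref{acycle6} into a ``$3+3$''. First I would invoke \cite{HK} to reduce to a realizable --- hence acyclic --- program whose source is $v_0 = [1,2,3,4]$ and whose sink is $[5,6,7,8]$, so that facet $9$ is on neither. Fix a topological sweep $L$ and let $w$ be the first vertex of $L$ lying on two of the sink facets $5,6,7,8$. Because the sink is the last vertex of $L$ and each of its in-neighbours lies on three sink facets, $w$ is not the sink; and because an in-neighbour of $w$ on two sink facets would precede $w$, the vertex $w$ lies on \emph{exactly} two of them, say $5$ and $6$. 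As the first vertex of the face $[5,6]$ along a topological sweep, $w$ is the source of $[5,6]$, which is a polygon; and the global sink, all of whose edges are incoming, is the sink of $[5,6]$.

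The first leg of the path lives in this polygon: $[5,6]$ has at most $9-2 = 7$ edges, namely the $2$-faces $[5,6,l]$, and its source $w$ and sink split the boundary cycle into two monotone arcs whose lengths sum to the number of edges, so the shorter arc gives a monotone path of length at most $\lfloor 7/2\rfloor = 3$ from $w$ to the sink. This is the role played by the bound $\Delta_{sm}(3,8)=4$ in Lemma~\ref{acycle6}.

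The second leg is a monotone path of length at most $3$ from $v_0$ to $w$, and here is where I would do the real work. Every vertex of $L$ before $w$ lies on at most one sink facet, hence on at least three of $\{1,2,3,4,9\}$, so --- the program being simple --- it is $v_0$, a neighbour of $v_0$, or of the \emph{exceptional} form $[i,j,9,k]$ with $i,j\in\{1,2,3,4\}$ and $k\in\{5,6,7,8\}$ (a vertex of the edge $[i,j,9]$). Let $u$ be an in-neighbour of $w$; it shares exactly one of $5,6$ with $w$, so after swapping the labels $5,6$ it has the form $u = [5,p,q,z]$ with $\{p,q\} = (w\cap u)\setminus\{5\}$ and $z$ not a sink facet. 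If $u$ is $v_0$ or a neighbour of $v_0$ we already have $v_0\to u\to w$ of length $2$; otherwise $9$ occurs among $p,q,z$, $u$ is exceptional, and I would trace one step further back to an in-neighbour $u'$ of $u$ and argue, via the same trichotomy applied to the pivoted edge of $u$, that $u'$ may be chosen to be $v_0$ or a neighbour of $v_0$, producing $v_0\to u'\to u\to w$ of length $3$. Concatenating the two legs yields a source-to-sink monotone path of length at most $3+3 = 6$.

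The step I expect to be the main obstacle is precisely this last back-tracing: ruling out that the chain of in-neighbours of $w$ stays among the exceptional vertices --- all of which contain facet $9$ --- for more than one step. This is the only place the spare ninth facet costs anything, and it is why the bound is $6$ here rather than the $5$ that bounds $\Delta_{sm}(4,8)$ in Lemma~\ref{foureight}; one has to show that such a chain of facet-$9$ vertices, read backwards along the acyclic sweep, must leave facet $9$ after a single step and land on a neighbour of $v_0$. A natural device is to note that the exceptional vertices near $w$, together with $v_0$ and its neighbours, all lie on a common $2$-face, so the question reduces to a short monotone path in a polygon with few edges.
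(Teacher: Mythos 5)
Your decomposition of the path as ``first leg to the first swept vertex $w$ on two sink facets, second leg inside the polygon $[5,6]$'' is fine as far as it goes: the argument that $w$ exists, is not the sink, lies on exactly two sink facets, and is the source of the polygon $[5,6]$, and that the polygon (at most $7$ edges) gives a monotone path of length at most $3$ from $w$ to the sink, all hold. The genuine gap is exactly the step you flag yourself: you never establish that there is a monotone path of length at most $3$ from $v_0$ to $w$. Your trichotomy for the vertices of $L_w$ ($v_0$, neighbours of $v_0$, or ``exceptional'' vertices $[i,j,9,k]$) is correct, but when the in-neighbour $u$ of $w$ is exceptional, nothing forces $u$ to have an in-neighbour that is $v_0$ or a neighbour of $v_0$: all in-neighbours of $u$ may again be exceptional (a backward chain of exceptional vertices can move along the polygon $[9,k]$, or switch its sink facet along the edge $[i,j,9]$), so the back-trace may need more than one extra step and the $3+3$ budget breaks. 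The ``natural device'' you propose to close this --- that the exceptional vertices near $w$ together with $v_0$ and its neighbours lie on a common $2$-face --- cannot work as stated, since every exceptional vertex lies on facet $9$ while $v_0=[1,2,3,4]$ does not, so no single $2$-face contains them all. As written, the proposal proves at most a weaker bound unless this chain is controlled, and no argument for controlling it is given.

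The paper avoids the difficulty with a much simpler split, $1+5$ rather than $3+3$: the source $[1,2,3,4]$ must have a neighbour lying on one of the sink facets (if every neighbour lay on the spare facet, the polytope would be the simplex on facets $1,2,3,4$ and the spare one, contradicting the existence of the sink), and that facet is a $3$-polytope with at most $8$ facets, so Klee's bound $\Delta_m(3,n)\le n-3$ gives a monotone path of length at most $5$ from that neighbour to the facet's sink, which is the global sink. The key point you miss is that one does not need the intermediate vertex to be the \emph{source} of the facet: the monotone diameter bound in dimension $3$ applies from an arbitrary vertex of the facet to its sink, which is precisely what makes the one-step-then-inside-a-facet argument work and what your sweep-based first leg was trying to substitute for.
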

\begin{proof}

 Suppose that $[1,2,3,4]$ is the source and $[6,7,8,9]$ is the sink of an orientation.  If each of the neighbors of $[1,2,3,4]$ were on facet 5, then the polytope would be a simplex with the five facets $1,2,3,4,5$, contradicting the assumption that $[6,7,8,9]$ is the sink.  Thus we can assume that $v_1=[6,2,3,4]$ is a vertex.  Unfortunately, we can not assume that $v_1$ is the source of facet 6.  Facet 6 is a $3$-polytope with at most 8 facets, so by Klee \cite{Kl2} there is a nonrevisiting directed path of length at most $8-3=5$ from $[6,2,3,4]$ to $[6,7,8,9]$. Thus $\Delta_{sm}(4,9) \le 5+1 = 6.$ \end{proof}

The nonrevisiting path on facet 6 is one of the following types: \newline
[[6, 2, 3, 4], [6, 5, 3, 4], [6, 5, 1, 4], [6, 5, 1, 7], [6, 5, 8, 7], [6, 9, 8, 7]],\newline
[[6, 2, 3, 4], [6, 5, 3, 4], [6, 5, 1, 4], [6, 5, 1, 7], [6, 8, 1, 7], [6, 8, 9, 7]],\newline
[[6, 2, 3, 4], [6, 5, 3, 4], [6, 5, 1, 4], [6, 7, 1, 4], [6, 7, 8, 4], [6, 7, 8, 9]],\newline
[[6, 2, 3, 4], [6, 5, 3, 4], [6, 5, 1, 4], [6, 7, 1, 4], [6, 7, 1, 8], [6, 7, 9, 8]],\newline
[[6, 2, 3, 4], [6, 5, 3, 4], [6, 5, 7, 4], [6, 5, 7, 1], [6, 8, 7, 1], [6, 8, 7, 9]],\newline
[[6, 2, 3, 4], [6, 5, 3, 4], [6, 5, 7, 4], [6, 1, 7, 4], [6, 1, 7, 8], [6, 9, 7, 8]],\newline
[[6, 2, 3, 4], [6, 7, 3, 4], [6, 7, 1, 4], [6, 7, 1, 5], [6, 7, 8, 5], [6, 7, 8, 9]],\newline
[[6, 2, 3, 4], [6, 7, 3, 4], [6, 7, 5, 4], [6, 7, 5, 1], [6, 7, 8, 1], [6, 7, 8, 9]].

\begin{theorem}
$\Delta_{sm}(4,9)=5.$
\end{theorem}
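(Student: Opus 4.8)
The plan is to pair the bound $\Delta_{sm}(4,9)\le 6$ of the preceding lemma with a computer verification, carried out exactly as in Theorem \ref{5step}, that no oriented matroid program can have $[1,2,3,4]$ as source, $[6,7,8,9]$ as sink, and shortest directed source-to-sink path of length $6$; together with a standard lower-bound construction this gives $\Delta_{sm}(4,9)=5$.

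For the lower bound $\Delta_{sm}(4,9)\ge 5$ I would appeal to the inequality $\Delta(d,n)\le\Delta_{sm}(d,n)$ from the introduction together with the existence of a $4$-polytope with $9$ facets of diameter $5$; equivalently, one may exhibit one of the realizable oriented matroid programs produced by the search whose shortest directed source-to-sink path has length $5$. This direction uses no computer and I would state it briefly.

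For the upper bound, suppose some oriented matroid program on $[9]\cup\{f,g\}$ with $d=4$, $r=5$ has $[1,2,3,4]$ as source and $[6,7,8,9]$ as sink. By the preceding lemma---whose argument, like that of Lemma \ref{delmfiveten}, uses only that the graphs of $3$-dimensional matroid polytopes and the digraphs of $2$-dimensional oriented matroid programs are realizable, and hence applies to non-acyclic and unbounded programs as well---the shortest monotone source-to-sink path has length at most $6$, so it remains to exclude the case of length exactly $6$. As in the proof of that lemma, after relabelling we may take $v_1=[6,2,3,4]$; the portion of the path lying inside facet $6$---a $3$-dimensional matroid polytope on at most $8$ facets---then runs from $[6,2,3,4]$ to $[6,7,8,9]$, has length exactly $5$, and by Klee \cite{Kl2} may be taken nonrevisiting, so it is one of the eight types listed above. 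For each of these eight choices I would assemble a conjunctive normal form instance consisting of the chirotope constraints of Definition \ref{chirodef} on the $\binom{11}{5}$ sign variables; the cocircuit and circuit conditions of Theorem \ref{circchiro}, as in Section 4, forcing $[1,2,3,4]$ to be a source and $[6,7,8,9]$ to be a sink, with a normalization such as $\chi(1,2,3,4,5)=+1$; the clauses forbidding every monotone path of length at most $5$ from $[1,2,3,4]$ to $[6,7,8,9]$; and the clauses forcing the chosen length-$6$ path, with $[1,2,3,4]$ prepended, to appear with each of its arcs correctly oriented. As in Theorem \ref{5step} and Lemma \ref{delmfiveten}, each of the eight instances should be reported unsatisfiable within a few minutes. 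Since any hypothetical counterexample yields, after relabelling, a satisfying assignment for one of these eight instances, this proves $\Delta_{sm}(4,9)\le 5$, and with the lower bound $\Delta_{sm}(4,9)=5$.

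The step I expect to be the real obstacle is purely computational: as the paper already notes for $\Delta_m(5,10)$, the clauses that exclude the short paths slow the solver considerably, and splitting the search into the eight fixed-path subproblems is essential for keeping each instance within reach of generally available solvers; calibrating this split---and confirming the running times---is the delicate part. A secondary point to check is completeness of the eight-path list: one must verify that guidelines (1)--(3) from the proof of Theorem \ref{5step}, together with the nonrevisiting reduction inside facet $6$ and the symmetry among the sink facets $7,8,9$, really leave only these eight combinatorial types, so that unsatisfiability of the eight instances genuinely excludes every counterexample.
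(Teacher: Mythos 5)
Your proposal matches the paper's proof in all essentials: the lower bound via Klee--Walkup's $\Delta(4,9)=5$ together with $\Delta(d,n)\le\Delta_{sm}(d,n)$, and the upper bound by a SAT computation that forbids all monotone paths of length $4$ and $5$ from $[1,2,3,4]$ to $[6,7,8,9]$ and then enforces, one at a time, each of the eight nonrevisiting length-$6$ path types through facet $6$ coming from the $\Delta_{sm}(4,9)\le 6$ lemma, finding each instance unsatisfiable. This is essentially the same argument and decomposition as the paper's, so no further comparison is needed.
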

\begin{proof} Klee and Walkup showed that $\Delta(4,9)= 5.$ As pointed out in \cite{HK}, there is a projective image of a polytope realizing this diameter and a monotone linear functional so that the images of the vertices of the original polytope at distance 5 are sent to the source and sink of the image.  This proves $\Delta(d,n)\le \Delta_{sm}(d,n)$ in general.  Thus $\Delta_{sm}(4,9)\ge 5.$  We have a computer proof for the inequality $\Delta_{sm}(4,9)\le 5.$
In addition to excluding paths of length 4, this computation excludes the $8(4!)^2$ paths of length 5 of the following types:\newline [[1, 2, 3, 4], [5, 2, 3, 4], [5, 6, 3, 4], [5, 6, 7, 4], [5, 6, 7, 8], [9, 6, 7, 8]],\newline
[[1, 2, 3, 4], [5, 2, 3, 4], [5, 6, 3, 4], [5, 6, 7, 4], [8, 6, 7, 4], [8, 6, 7, 9]],\newline
[[1, 2, 3, 4], [5, 2, 3, 4], [5, 6, 3, 4], [7, 6, 3, 4], [7, 6, 8, 4], [7, 6, 8, 9]],\newline
[[1, 2, 3, 4], [6, 2, 3, 4], [6, 5, 3, 4], [6, 5, 7, 4], [6, 5, 7, 8], [6, 9, 7, 8]],\newline
[[1, 2, 3, 4], [6, 2, 3, 4], [6, 5, 3, 4], [6, 5,  7, 4], [6, 8, 7, 4], [6, 8, 7, 9]],\newline
[[1, 2, 3, 4], [6, 2, 3, 4], [6, 7, 3, 4], [6, 7, 5, 4], [6, 7, 5, 8], [6, 7, 9, 8]],\newline
[[1, 2, 3, 4], [6, 2, 3, 4], [6, 7, 3, 4], [8, 7, 3, 4], [8, 7, 9, 4], [8, 7, 9, 6]],\newline
[[1, 2, 3, 4], [6, 2, 3, 4], [6, 7, 3, 4], [6, 7, 1, 4], [6, 7, 1, 8], [6, 7, 9, 8]].

Then each of the eight paths of length 6 above is enforced, in each case leading quickly to the conclusion that no such example is possible.\end{proof} 

The following facet-vertex matrix comes from an oriented matroid program that has a shortest path from $[1,2,3,4]$ to $[6,7,8,9]$ of length 6, but has a source that is adjacent to $[1,2,3,4]$.

\begin{footnotesize}
\begin{verbatim}
[-1 -1 -1 -1  1 -1 -1  1 -1 -1  0  0  0  0  0  0  0  0  0  0  0]
[-1 -1 -1 -1  0  0  0  0  0  0 -1 -1  1  1 -1 -1 -1 -1  0  0  0]
[ 1 -1  0  0  1  1 -1  0  0  0  1  1 -1 -1  0  0  0  0 -1  0  0]
[ 1  0  1  0 -1  0  0 -1  0  0 -1  0  0  0  0  0  0  0  1  0  0]
[ 0  0  0  0  1  1  0  1  1  0  0  1  1  0  1  1  0  0  1 -1  0]
[ 0  0 -1  1  0  0  0 -1 -1  1  1 -1  0  0 -1  0  1  0 -1  1  1]
[ 0  0  0  0  0  1  1  0  1  1  0  0  1  1  0  1  0 -1  0  1  1]
[ 0  0  0  0  0  0  0  0  0  0  0  0  0  0 -1  1  1  1  0  1  1]
[ 0 -1  0 -1  0  0 -1  0  0 -1  0  0  0 -1  0  0 -1 -1  0  0  1]
\end{verbatim}
\end{footnotesize}
\begin{theorem}
$\Delta_m(4,9)=6$
\end{theorem}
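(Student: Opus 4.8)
The plan is to prove the two bounds $\Delta_m(4,9)\ge 6$ and $\Delta_m(4,9)\le 6$ separately. The lower bound is read off from the facet-vertex matrix displayed just above: it comes from an oriented matroid program with a shortest monotone path of length $6$ from the vertex $[1,2,3,4]$ to the sink $[6,7,8,9]$, and, as with every lower bound determined in this paper, this program may be taken realizable, so it yields a simple $4$-polytope with $9$ facets and an admissible functional with the required property. (Alternatively $\Delta_m(4,9)\ge 6$ can be obtained by a standard construction from Todd's example \cite{Todd}, in the spirit of the prism argument giving $\Delta_m(5,10)\ge 6$.) The real work is the upper bound, which follows the pattern of Lemma \ref{delmfiveten} and Theorem \ref{5step}: a combinatorial reduction to a finite list of monotone paths of length $7$, followed by a batch of unsatisfiability checks.

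For the reduction I would first prove $\Delta_m(4,9)\le 7$ by the method of Lemma \ref{delmfiveten}, adapted to the one ``extra'' facet present when $n=9$. Let $P=(v_0,v_1,\ldots,v_k)$ be a shortest monotone path with $v_0=[1,2,3,4]$ and $v_k$ the sink $[6,7,8,9]$; as in Lemma \ref{delmfiveten} we do not assume $v_0$ is the source. The vertex $v_1$ drops one of the facets $1,2,3,4$ and enters one of $5,6,7,8,9$. If $v_1$ enters a sink facet, say $v_1=[6,2,3,4]$, then $v_1$ and $v_k$ both lie on facet $6$, a $3$-polytope with at most $8$ facets; since the bound $\Delta_m(3,8)\le 5$ of \cite{Kl2} applies to oriented matroid programs, concatenating the monotone edge $v_0\to v_1$ with a monotone path of length at most $5$ from $v_1$ to the sink inside facet $6$ gives $k\le 6$. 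Otherwise $v_1=[5,2,3,4]$, and minimality of $P$ prevents $v_2$ from leaving facet $5$ (every neighbor of $v_1$ not on facet $5$ is $v_0$ itself or a neighbor of $v_0$) and, after relabeling, forces $v_2=[5,6,3,4]$; the same reduction inside facet $6$ then gives $k\le 2+5=7$. In particular, a hypothetical counterexample with $k=7$ has, by \cite{Kl2}, a shortest monotone path that begins $[1,2,3,4],[5,2,3,4],[5,6,3,4]$ and then follows a \emph{nonrevisiting} directed path inside facet $6$ from $[5,6,3,4]$ to $[6,7,8,9]$ on which only facets $1$ and $2$ both enter and leave. Together with the guidelines (1)--(3) from the proof of Theorem \ref{5step}, this leaves an explicit finite list of length-$7$ path types, to be enumerated exactly as in the listings above and in \cite{BremnerSchewe}.

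For each length-$7$ path in the list I would assemble a conjunctive normal form statement consisting of: the chirotope constraints of Definition \ref{chirodef}; the constraints that $[1,2,3,4]$ is a vertex and $[6,7,8,9]$ is the sink (but not those orienting the edges at $[1,2,3,4]$ outward, so that unbounded programs are allowed, exactly as in the $\Delta_m(5,10)$ computation); the clauses forcing the chosen length-$7$ path to appear; and the clauses excluding every monotone path of length $4$, $5$, or $6$ from $[1,2,3,4]$ to $[6,7,8,9]$. Feeding each such instance to the satisfiability solver should return unsatisfiability, which together with $\Delta_m(4,9)\le 7$ and the lower bound yields $\Delta_m(4,9)=6$. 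The hard part here is not conceptual. It is, first, checking that the finite lists of paths — the length-$7$ paths to enforce and the length-$4$, $5$, $6$ paths to exclude — are complete; and second, the size of the instances: as noted for $\Delta_m(5,10)\le 6$, the roughly $8(4!)^2$ clauses that exclude the length-$6$ paths dominate the input and slow the solver considerably, and it is precisely the factor $(4!)^2$ rather than $(5!)^2$ that keeps the $d=4$ computation within reach of generally available resources.
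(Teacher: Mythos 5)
Your approach is correct in outline but takes a genuinely different route from the paper's, which settles the upper bound in a single sentence: by Proposition~1.1 of \cite{HK} (the monotonicity $\Delta_m(d,n)\le\Delta_m(d+1,n+1)$), the computation already performed for $\Delta_m(5,10)\le 6$ immediately yields $\Delta_m(4,9)\le 6$, so no new enumeration or solver runs are required. You instead propose a self-contained $(4,9)$ computation: a reduction to length $7$ in the style of Lemma~\ref{delmfiveten} (minimality forces $v_1$ off the starting facets, and then forces $v_2$ onto a sink facet, after which $\Delta_m(3,8)\le 5$ from \cite{Kl2} applies inside that facet --- this reduction is sound), then an explicit list of nonrevisiting length-$7$ path types beginning $[1,2,3,4],[5,2,3,4],[5,6,3,4]$, then a batch of SAT instances that exclude all monotone paths of lengths $4$--$6$ while enforcing each length-$7$ candidate. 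The paper's route is shorter and reuses work already done; yours would be an independent cross-check with smaller instances (roughly $(4!)^2$ rather than $(5!)^2$ clauses per length-$6$ exclusion), but it is a fresh computation you have not actually carried out and whose length-$7$ path list still needs to be written down and verified complete. One caveat on the lower bound: you should not read $\Delta_m(4,9)\ge 6$ off the displayed $9\times 21$ facet-vertex matrix, since the paper never checks that oriented matroid program for realizability and explicitly declines to check realizability of its $(5,10)$ examples; the safe argument --- and the one the paper uses --- is the alternative you mention, adding an inequality to Todd's $\Delta_m(4,8)\ge 5$ example.
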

Examples showing $\Delta_m(4,9)\ge 6$ can be constructed from those showing 
$\Delta_m(4,8)\ge 5$ by adding an extra inequality.
The proof for $\Delta_{sm}(4,9)\le 6$ given above does not carry over to the
$\Delta_m$ case.  This is because the neighbors of $[1,2,3,4]$ on arcs leaving
$[1,2,3,4]$ might all be on facet 5, while the neighbors of $[1,2,3,4]$ on
facets in $\{6,7,8,9\}$ could all be on arcs directed toward $[1,2,3,4]$.  
Thus we need a computer proof.  
Due to Proposition 1.1 of \cite{HK}, our computation that showed
$\Delta_m(5,10)\le 6$ also implies $\Delta_m(4,9)\le 6$.

\subsection{Experimenting with (5,10)}

We would like to contrast Theorem \ref{wheresource}, obtained for the case $(4,8)$, with similar observations for the $(5,10)$ case that indicate that the strictly monotone five-step conjecture is almost false. 

\begin{theorem}
For each of the following properties, there exist oriented matroid programs of dimension 5 with 10 facets, with the sink at $[6,7,8,9,10]$ and with the shortest directed path from $[1,2,3,4,5]$ to $[6,7,8,9,10]$ of length 6, having the given property:
\begin{enumerate}
    \item The outmap of vertex $[1,2,3,4,5]$ has size 4,
    \item The source is a neighbor of $[1,2,3,4,5]$.
\end{enumerate}
\end{theorem}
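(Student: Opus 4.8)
The plan is to prove both assertions by satisfiability search, in the same framework as Theorem \ref{5step}, but now producing a \emph{satisfying} assignment rather than a contradiction. The two SAT instances share a common skeleton: the chirotope constraints of Definition \ref{chirodef} for a uniform rank-$6$ chirotope $\chi$ on $[12]$; the clauses forcing $[1,2,3,4,5]$ and $[6,7,8,9,10]$ to be vertices; the clauses orienting every edge incident to $[6,7,8,9,10]$ toward it, so that it is the sink; the normalization $\chi(1,2,3,4,5,6)=+1$; the disjunctions forbidding \emph{every} monotone path of length $\le 5$ from $[1,2,3,4,5]$ to $[6,7,8,9,10]$; and the conjunction of $\chi$-values forcing one fixed monotone path of length $6$ between these vertices (one of the eight listed in the proof of Theorem \ref{5step}, chosen so as not to conflict with the extra property below). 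The last two items together pin the shortest directed distance from $[1,2,3,4,5]$ to $[6,7,8,9,10]$ at exactly $6$. Crucially, unlike in Theorem \ref{5step}, we do \emph{not} require $[1,2,3,4,5]$ to be the source, so no contradiction is expected.

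For part (1) I would add the five clauses pinning the outmap of $[1,2,3,4,5]$: the edge joining $[1,2,3,4,5]$ to its neighbor $(1,2,3,4,5)_{k,11}$ leaves $[1,2,3,4,5]$ exactly when $\chi((1,2,3,4,5),12)=\chi((1,2,3,4,5)_{k,11},12)$, so imposing this equality for four values of $k\in\{1,2,3,4,5\}$ and its negation for the fifth forces the outmap to have size $4$ while keeping $[1,2,3,4,5]$ a non-source vertex. For part (2) I would instead fix one prospective neighbor $w$ of $[1,2,3,4,5]$ that is not the second vertex of the enforced length-$6$ path, add the clauses making $w$ a vertex, orient the edge $\{w,[1,2,3,4,5]\}$ into $[1,2,3,4,5]$, and orient every other edge incident to $w$ away from $w$, so that $w$ is the unique source. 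In each case the resulting conjunctive normal form statement is handed to the solver SAT on cocalc.com; the expected outcome is a satisfying assignment, i.e. an explicit $\chi$, returned within a few minutes.

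From such a $\chi$ one recovers, for each $5$-subset $S=[10]\setminus A$ that is a vertex, the fundamental cocircuit whose signs on the ten facet coordinates are dictated by conditions 2 and 3 of Theorem \ref{circchiro}, and records these as the columns of a facet--vertex matrix of the form displayed elsewhere in the paper, which exhibits the two examples concretely. A direct inspection of that matrix then confirms that $[6,7,8,9,10]$ is the sink, that the shortest directed path from $[1,2,3,4,5]$ has length $6$, and that the advertised property holds. The main obstacle is computational rather than conceptual: fixing the shortest length at exactly $6$ means carrying all of the (on the order of $(5!)^2$) length-$\le 5$ exclusion clauses at once, together with the chirotope clauses and the path constraints, which is near the practical ceiling of the solver, just as in the $\Delta_m(5,10)\le 6$ computation. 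A secondary point is to verify, when reading off the facet--vertex matrix, that its columns really are the vertex cocircuits of the program and that no shorter directed path is present through a configuration not covered by guidelines (1)--(3) in the proof of Theorem \ref{5step}.
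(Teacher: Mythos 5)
Your proposal is correct and follows essentially the same route as the paper: the paper's ``proof'' consists of exhibiting two facet--vertex matrices as explicit witnesses, found by precisely the kind of SAT search you describe (chirotope clauses, sink clauses, exclusion of all length-$5$ paths, enforcement of a length-$6$ path, plus the property-specific clauses), with the properties then checked by inspecting the resulting matrices. The only point to watch is in part (2): forcing all edges at your chosen neighbor $w$ to leave $w$ makes $w$ \emph{a} source but not automatically \emph{the} source, so the final inspection of the facet--vertex matrix must confirm that no other vertex has an empty in-map, which your verification step covers.
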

Here are facet-vertex matrices for these examples:
\begin{tiny}
\begin{verbatim}
[-1 -1 -1 -1  1 -1  1 -1 -1  1 -1  1 -1 -1 -1 -1 -1 -1 -1 -1 -1 -1  0  0  0  0  0  0  0  0  0  0  0  0  0  0  0  0  0  0  0  0]
[ 1  1  1  1 -1  1 -1 -1  1 -1  1 -1  0  0  0  0  0  0  0  0  0  0 -1  1  1 -1 -1 -1  1  1  0  0  0  0  0  0  0  0  0  0  0  0]
[-1 -1 -1 -1  1 -1  0  0  0  0  0  0 -1  1 -1 -1  0  0  0  0  0  0 -1 -1  1  1  1  1  0  0 -1 -1  1  1  1 -1 -1  0  0  0  0  0]
[-1 -1  0  0  0  0  1  1  0  0  0  0  1  0  0  0  1  0  0  0  0  0  1  1 -1  1  0  0 -1 -1  1  1 -1 -1 -1  0  0  1 -1  0  0  0]
[-1  0 -1  0  0  0 -1  0 -1 -1  1  0 -1  1  0  0  1  1  1  1  0  0 -1  0  0  0 -1  1 -1  0 -1 -1  1  1  0 -1  0 -1 -1 -1 -1  0]
[ 0  0  0  0  0  0  0  0  1 -1  0 -1  0  0  0  0  0 -1 -1 -1  1  1  1 -1  0  0  1  0 -1 -1 -1  0  0  0  0  0  0  1  0  1  1  1]
[ 0  0  0  1 -1  0  0  0 -1  0  1  1  0  0 -1 -1  0  1  0  0 -1 -1  0  1 -1  0 -1  1  0  1  1 -1  0  0 -1  0  1 -1 -1 -1  0  1]
[ 0  0  1 -1  0  0  0  0  0  0 -1  0  0 -1 -1  0  0 -1 -1  0 -1  0  0  0  0  0  0 -1  0  0  0  1 -1  0 -1  1  1  0 -1 -1  1  1]
[ 0  0  0  0  1  1  1  1  0  1  0  1  0  0  0  1  1  0  0  1  0  1  0  0  1  1  0  0  1  1  0  0  1  1  1  1  1  1  1  1  1  1]
[ 0  1  0  0  0  1  0  1  0  0  0  0 -1 -1 -1  1  1  0 -1  1 -1  1  0  0  0  1  0  0  0  0  0  0  0  1  0  1  1  0  0  0  1  1]
\end{verbatim}
\end{tiny}

\begin{tiny}
\begin{verbatim}
[-1 -1 -1 -1 -1 -1 -1 -1 -1 -1  1  1  1 -1 -1 -1  1  1  1  1  1  1 -1 -1  0  0  0  0  0  0  0  0  0  0  0  0  0  0]
[ 1 -1  1 -1 -1  1 -1 -1  0  0  0  0  0  0  0  0  0  0  0  0  0  0  0  0 -1 -1 -1  0  0  0  0  0  0  0  0  0  0  0]
[-1 -1 -1 -1  0  0  0  0 -1 -1  1  1  1 -1 -1 -1  0  0  0  0  0  0  0  0  1  1  0 -1 -1 -1  1 -1 -1 -1  0  0  0  0]
[ 1  1  0  0  1 -1  0  0 -1  1 -1 -1 -1  0  0  0  1  1 -1  0  0  0  0  0  1  0  1  1  1  1 -1  0  0  0 -1 -1  0  0]
[-1  0 -1  0  1  0  1  1 -1  0  0  0  0  0  0  0 -1 -1 -1 -1 -1 -1  0  0  1  1  1 -1 -1  0  0 -1  0  0  1 -1 -1  0]
[ 0  1  0  1  0 -1  1 -1  0 -1  0  0  0 -1  0  0  0  0  0 -1 -1  1  1  1  1  1  1 -1  0 -1  0 -1 -1  0 -1  0  1  1]
[ 0  0  0  0  0  0  0  0  0  0 -1  1  0  0 -1  0  1  1  0  1  1  0 -1  0  0  0  0  1  1  1 -1  1  1  1  1 -1 -1  1]
[ 0  0 -1 -1  0  0 -1  0 -1  0  0  0  1 -1  0  1  0  0 -1  0  0 -1  0  1  0 -1  0  0  1  0 -1 -1 -1  1  0 -1 -1  1]
[ 0  0  0  0  0  0  0  0  0  0  1  0  1  0  1  1  1  0  1 -1  0  1  1  1  0  0  0  0  0  0  1  0  0  1  0  1  1  1]
[ 0  0  0  0  1  1  0  1  0  1  0 -1  0  1  1  1  0 -1  0  0  1  0  1  1  0  0 -1  0  0  1  0  0  1  1  1  0  0  1]
\end{verbatim}
\end{tiny}

We did not test these oriented matroid programs for realizability. Because we found many such examples, we conjecture that there are realizable instances.  For the current state of realizability testing methods, see \cite{Firsching}.  

\section{Conclusions}
The computer enumerations that showed $\Delta_{sm}(5,10)=\Delta_{sm}(4,9)=5$ lead us to suspect that there exist reasonably short proofs of these facts.  
Perhaps this is an illusion.  We hope to have convinced the reader that oriented matroid programming is a useful tool for discovering properties of polytopal digraphs.


\begin{thebibliography}{99}
\bibitem{OMbook}  A. Bj\"orner, M. Las Vergnas, B. Sturmfels, N. White, G. M. Ziegler,  ``Oriented matroids," Second edition. Encyclopedia of Mathematics and its Applications, 46. Cambridge University Press, Cambridge, 1999.
\bibitem{BremnerSchewe} D. Bremner and L. Schewe, ``Edge-Graph Diameter Bounds for Convex Polytopes with Few Facets," {\it Experimental Mathematics} {\bf 20} 229--237 (2011)
\bibitem{BDHW} D. Bremner, A. Deza, W.Hua, L. Schewe ``More bounds on the diameters of convex polytopes," {\it Optim. Methods Softw.} {\bf 28} (2013), no. 3, 442--450.
\bibitem{Bland} R. G. Bland, ``A combinatorial abstraction of linear programming,"  J. Combinatorial Theory Ser. B 23 (1977), no. 1, 33--57.

\bibitem{EdmondsFukuda} J. Edmonds, K. Fukuda,``Oriented matroid programming," Ph. D. Thesis of K. Fukuda, University of Waterloo, 1982.
\bibitem{Firsching} M. Firsching, ``Realizability and inscribability for simplicial polytopes via nonlinear optimization," Math. Program., Ser. A  {\bf 166}, 273--295 (2017)
\bibitem{FL} J. Folkman, J. Lawrence, ``Oriented Matroids," {\it J. Combinatorial Theory, Ser. B} \textbf{25}, 199--236 (1978).
\bibitem{HK} F. Holt and V. Klee, ``A proof of the strict monotone 4-step conjecture," in {\it Advances in Discrete and Computational Geometry}, Contemporary Mathematics, vol. 223, American Mathematical Society, Providence, R. I., 1999 (B. Chazelle, J.E. Goodman, and R. Pollack, eds.), pp. 201--216.
\bibitem{KW} V. Klee and D.W. Walkup, ``The d-step conjecture for polyhedra of dimension $d<6$," Acta Math. {\bf 133}, 53-78 (1967)
\bibitem{Kl2} V. Klee, ``Paths on Polytopes. I," {\it SIAM J. Appl. Math.} {\bf 13}, 946--956 (1965)
\bibitem{Santos} F. Santos, ``A counterexample to the Hirsch conjecture," {\it Ann. of Math.} {\bf 176}, 383--412 (2012)
\bibitem{Schewe} L. Schewe, ``Non-realizable minimal vertex triangulations of surfaces: Showing non-realizability using oriented matroids and satisfiabiity solvers," Discrete and Computational Geometry {\bf 43}, 289--302 (2010).
\bibitem{Todd} M. J. Todd, ``The monotonic Hirsch conjecture is false for dimension at least 4," {\it Mathematics of Operations Research} {\bf 5}, 599--601 (1980)
\bibitem{Tschirschnitz} F. Tschirschnitz, ``Testing extendability for partial chirotopes is NP-complete," In: Proceedings of the 13th Canadian Conference on Computational Geometry, pp 165-168 (2001)


\end{thebibliography}
\end{document}